\newcommand{\nn}{\nonumber}
\newcommand{\beq}{\begin{equation}}
\newcommand{\eeq}{\end{equation}}
\newcommand{\Nb}{{\mathbb N}}
\newcommand{\T}{\tau}
\newcommand{\trmi}{t \rightarrow +\infty}
\newcommand{\ve}{\varepsilon}
\renewcommand{\t}{\tau}
\newtheorem{remark}{Remark}
\title{Rates of convergence to scaling profiles in a submonolayer 
deposition model and the preservation of memory of the initial condition}
\author{Fernando P. da Costa\footnotemark[2]\ \footnotemark[4]
\and Jo\~ao T. Pinto\footnotemark[3]\ \footnotemark[4]
\and Rafael Sasportes\footnotemark[2]\ \footnotemark[4]}
\begin{document}

\maketitle

\renewcommand{\thefootnote}{\fnsymbol{footnote}}

\footnotetext[2]{Departamento de Ci\^encias e Tecnologia, Universidade Aberta, Lisboa, Portugal, and
Centro de An\'alise Matem\'atica, Geometria e Sistemas Din\^amicos, Instituto Superior T\'ecnico,
Universidade de Lisboa, Lisboa, Portugal}
\footnotetext[3]{Departamento de Matem\'atica, and Centro de An\'alise Matem\'atica, Geometria e Sistemas Din\^amicos,
Instituto Superior T\'ecnico, Universidade de Lisboa, Lisboa, Portugal}
\footnotetext[4]{Partially funded by FCT/Portugal through project RD0447/CAMGSD/2015.}

\pagestyle{myheadings}
\thispagestyle{plain}
\markboth{F.~P. DA COSTA, J.~T. PINTO AND R. SASPORTES}{RATES OF CONVERGENCE TO SCALING PROFILES}



\begin{abstract}
We establish rates of convergence of solutions to scaling (or similarity) profiles in a coagulation type system modelling 
submonolayer deposition. We prove that, although all memory of the initial condition is lost in the similarity limit, information
about the large cluster tail of the initial condition  is preserved in the rate of approach to the similarity profile.
The proof relies in a change of variables that allows for the decoupling of the original infinite system of ordinary differential equations into a closed 
two-dimensional nonlinear system for the monomer--bulk dynamics and a lower triangular infinite dimensional linear one for the cluster
dynamics. The detailed knowledge of the long time  
monomer concentration, which was obtained earlier by Costin et al.
in \cite{costin} using asymptotic methods and is rederived here by center manifold arguments, is then used for the asymptotic
evaluation of an integral representation formula for the concentration of $j$-clusters. The use of higher order expressions, both
for the Stirling expansion and for the monomer evolution at large times allow us to obtain, not only the similarity limit, but also
the rate at which it is approached.
\end{abstract}

\begin{keywords}
Dynamics of ODEs, coagulation processes, convergence to scaling behaviour, asymptotic evaluation of integrals, submonolayer deposition model
\end{keywords}

\begin{AMS}
 34C11, 34C20, 34C45, 34D05, 82C21.
\end{AMS}


\section{Introduction}\label{sec:intr}

Submonolayer deposition is
the process of particle deposition onto a surface such that
the deposited particles can diffuse and coagulate to form clusters, and those clusters are so sparse 
as \emph{not} to cover the full original surface.  The theoretical modelling of this
process can be done using a variety of approaches \cite{Mul09}. Our work is a continuation of a
number of recent mathematical studies of the dynamics of deposition using a mean-field approach \cite{cps,crw,cs,costin}.

Denoting by $c_j=c_j(t)$ the concentration number of clusters of size $j$, or $j$-clusters, at time $t$, on the
deposition surface, assuming this surface to be bombarded at a constant rate $\alpha>0$ by 
1-clusters (also called \emph{monomers}), and considering the
clusters' concentrations so small that cluster-cluster reactions can be disregarded, the cluster dynamics on 
the surface can be modelled by a Smoluchowski's coagulation system with Becker-D\"oring like coagulation kernel, meaning that
the only allowed coagulation reactions are those in which a monomer takes part, namely $(1)+(j)\rightarrow (j+1)$,
 with rate coefficients $a_{1,j}\geq 0$, and $j\in\{1,2,\ldots\}$. 

An assumption that is relevant in some applications is the existence of a critical cluster size $n$ below which
clusters are not stable and do not occur in the system in any significant amount in the time scale
of the coagulation reactions. There are several ways to model this assumption (see, e.g., \cite{costin} and
references therein). In \cite{costin} it was modelled by considering that no
clusters of size larger than $1$ and smaller than or equal to $n-1$ can exist, and thus the smaller
cluster that is not a monomer has size $n$ and is formed when $n$ monomers come together and react
into an $n$-cluster (if, as in Monte Carlo
simulations, we consider the monomers sitting in the vertices of a lattice, we could have these ``multiple''
collisions simply by having $n-1$ monomers surrounding an empty site ---as in the four nearest
neighbours in a square lattice--- which is suddenly bombarded by an aditional monomer, creating the $n$-cluster
in a single $n$ body reaction).

If one considers that the
coagulation rates are independent of the cluster sizes undergoing reaction ($a_{1,j}\equiv 1$, say), the mean-field model 
for submonolayers deposition with a critical cluster size $n$ becomes the following 
infinite system of ordinary differential equations:

\beq
\left\{
\begin{array}{lcl}
\dot{c}_1 & = & \alpha - nc_1^n - c_1 \displaystyle{\sum_{j=n}^{\infty}c_j} \\
\dot{c}_n & = & c_1^n-c_1c_n,  \\
\dot{c}_j & = & c_1c_{j-1}-c_1c_j, \;\;\; j \geq n+1. 
\end{array}
\right.\label{system}
\eeq

System (\ref{system}) with $n=2$ (i.e., with no unstable clusters) 
was considered in \cite{cps,crw,cs}, where the long time behaviour
of solutions and the approach to a similarity profile was studied; in \cite{cps,cs} 
it was even considered the case of
time dependent monomer input $\alpha=\alpha(t)$ of power law type.

In \cite{costin} system (\ref{system}) was considered with general $n\geq 2,$ and the
results in \cite{crw} were correspondingly generalized to the following

\begin{theorem}[\mbox{\cite[Theorem 5]{costin}}]\label{inputmonomers}
Let $(c_j)$ be any solution to (\ref{system}). Consider the new time scale $\t(t):=\int_{t_0}^tc_1(s)ds,$ 
and let $\widetilde{c}_j(\t):=c_j(t(\t)).$  Then, 
\[
\displaystyle{
\lim_{{\begin{array}{c}j,\,\t \rightarrow +\infty\\ \eta=j/\t\; \mbox{\rm  fixed}\\ \eta \neq 1\end{array}}}\!\!
\left(\frac{n\t}{\alpha}\right)^{(n-1)/n}\!\!
\widetilde{c}_j(\t) = \Phi_1(\eta) :=
\left\{\begin{array}{ll} \left(1-\eta\right)^{-(n-1)/n},& \mbox{\rm  if $\;0<\eta<1$}\\
0, & \mbox{\rm  if $\;\eta>1$}.
\end{array}\right.
}
\]
\end{theorem}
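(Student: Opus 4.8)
The plan is to exploit the time-substitution $\tau(t)=\int_{t_0}^t c_1(s)\,ds$ to linearize the cluster equations. Since $d\tau/dt=c_1$, writing $\tilde c_j(\tau)=c_j(t(\tau))$ turns the $j$-equations for $j\ge n$ into the lower-triangular linear cascade
\[
\tilde c_n'=\tilde c_1^{\,n-1}-\tilde c_n,\qquad \tilde c_j'=\tilde c_{j-1}-\tilde c_j\quad(j\ge n+1),
\]
where $'$ denotes $d/d\tau$, driven by the single source $g(\tau):=\tilde c_1(\tau)^{n-1}$. Summing over $j\ge n$ and telescoping gives $\rho'=\tilde c_1^{\,n-1}$ for the bulk $\rho:=\sum_{j\ge n}\tilde c_j$, while the monomer equation becomes $\tilde c_1'=\alpha/\tilde c_1-n\tilde c_1^{\,n-1}-\rho$. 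Thus I would first record that $(\tilde c_1,\rho)$ obeys a closed planar system, decoupled from the clusters, exactly as announced in the abstract.

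Next I would solve the cascade explicitly. Multiplying by the integrating factor $e^{\tau}$ and setting $W_j=e^{\tau}\tilde c_j$ reduces it to $W_n'=e^{\tau}g$ and $W_j'=W_{j-1}$, which integrates to the closed-form representation
\[
\tilde c_j(\tau)=e^{-\tau}\sum_{i=n}^{j}\tilde c_i(t_0)\,\frac{\tau^{\,j-i}}{(j-i)!}
+\int_0^{\tau}e^{-u}\,\frac{u^{\,j-n}}{(j-n)!}\,g(\tau-u)\,du,
\]
after the substitution $u=\tau-\sigma$. The kernel $e^{-u}u^{k}/k!$ (with $k=j-n$) is a Poisson/Gamma density in $u$, concentrated near $u=k$ with width $\sqrt{k}$, and it integrates to one; this is the formula whose asymptotics produce the profile. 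The third ingredient is the long-time law of the monomer: analyzing the planar system by center-manifold arguments (the rederivation promised in the abstract), the dominant balance $\alpha/\tilde c_1\sim\rho$ together with $\rho'=\tilde c_1^{\,n-1}$ forces $\tilde c_1'\sim-\tilde c_1^{\,n+1}/\alpha$, whence $\tilde c_1(\tau)\sim(\alpha/(n\tau))^{1/n}$ and therefore $g(\tau)\sim(n\tau/\alpha)^{-(n-1)/n}$ as $\tau\to+\infty$. The normalizing factor in the statement is precisely $1/g(\tau)$ to leading order, which is why it is the correct scaling.

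Finally I would evaluate the integral by Laplace's method with a moving peak. In the regime $j,\tau\to\infty$ with $\eta=j/\tau$ fixed, the kernel concentrates at $u\approx k\approx\eta\tau$, so $\tau-u\approx(1-\eta)\tau$ and, using the Stirling expansion of $(j-n)!$ to cancel the Poisson normalization, the source contribution tends to $g((1-\eta)\tau)\sim(n(1-\eta)\tau/\alpha)^{-(n-1)/n}$; multiplying by $(n\tau/\alpha)^{(n-1)/n}$ yields exactly $(1-\eta)^{-(n-1)/n}$. For $\eta>1$ the peak $u\approx\eta\tau$ lies beyond the integration limit $\tau$, so only an exponentially small tail survives and the limit is $0$; the same large-deviation estimate shows the initial-condition sum (a truncated Poisson series in $\tau$) is negligible after rescaling, which is the mechanism by which memory of the data is lost at leading order.

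I expect the main obstacle to be making the Laplace concentration rigorous \emph{uniformly} in the double limit: one must control $g(\tau-u)$ over the full range $0\le u\le\tau$---including the early times where the monomer asymptotic is not yet valid, and the boundary layer $\eta\to 1$ where the peak reaches the endpoint---and justify interchanging the limit with the integral. Here the higher-order Stirling and monomer expansions alluded to in the abstract become essential, both to secure the leading limit $\Phi_1$ and, ultimately, to extract the rate at which it is approached.
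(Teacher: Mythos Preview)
Your proposal is correct and follows essentially the same route as the paper (and the references \cite{crw,costin} on which it builds): decouple into the monomer--bulk planar system plus a lower-triangular linear cascade, solve the cascade to obtain the representation formula \eqref{cjtausolution}, feed in the leading monomer asymptotic $\widetilde c_1(\tau)\sim(\alpha/(n\tau))^{1/n}$, and evaluate the resulting integral by Laplace's method with the Gamma/Poisson kernel peaked at $u\approx\eta\tau$, treating the initial-data sum by the same large-deviation estimate. The one place where your sketch diverges from the paper is the derivation of the monomer asymptotic: you argue heuristically in the $\tau$-variable via the dominant balance $\alpha/\widetilde c_1\sim\rho$, whereas the paper (Section~\ref{sec:cma}) works rigorously in the original $t$-variable, passing to $(x,v)=(c_1,\alpha-c_1y)$ and an auxiliary time $\zeta=\int dt/x$ so that the origin becomes a genuine rest point to which center-manifold theory applies, and only afterwards (Section~\ref{sec:cts}) translates the result into $\tau$---your shortcut gives the right leading order but would need to be upgraded to a rigorous argument along these lines.
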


This result means that, in the self-similar (or similarity) variable $\eta=j/\t$, the scaled solutions approach a universal profile
$\Phi_1(\eta)$ in the similarity limit $j, \t\to\infty$ with $\eta$ constant.

The purpose of the present paper is to study the rate at which the limit function
$\Phi_1(\eta)$ in Theorem~\ref{inputmonomers} is approached. This is a kind of problem that is 
rather natural in long time dynamics, with the paradigmatic example being the linearization method
which, when applicable, provides an exponential estimate for the rate of convergence of solutions to 
the limit (an equilibrium, say).

In the context of Smoluchowski's coagulation equations and analogous systems (such as coa\-gu\-la\-tion-fragmentation, and
Bercker-D\"oring systems) the proofs of existence of, and convergence to, self-similar solutions 
are already so demanding that the study of the rate at which this convergence takes place is, so far, typically out of reach.

To the best of our knowledge, so far only the two papers \cite{cmm2010,sri}
considered the problem of the rate of converge to self-similar behaviour in Smoluchowski's systems.
In \cite{cmm2010} the result was obtained for the constant kernel case (in continuous variables) $a_{x,y}\equiv 1$  using the approach of
linearizing the coagulation equation in self-similar variables about the self-similar solution, 
and proving that the linearized operator has a spectral gap in an appropriate
scale of weighted Sobolev spaces. In \cite{sri} the case of so called solvable kernels (i.e. $a_{x,y}=1,\, x+y,\,xy$) was considered
using an approach analogue to the Berry-Ess\'een theorem in classical probability theory. Both approaches require rather delicate
and difficult analysis.

In the present paper we are able to approach this rate of convergence problem using an essentially simpler approach due
to the special nature of the coagulation reactions in (\ref{system}). Exploring the approach which was already
used to good effect in the proof of Theorem~\ref{inputmonomers} in \cite{crw,costin}, we can establish the main
theorem of this paper:

\begin{theorem}\label{teo:main} 
Let $(c_j)$ be any non-negative solution of (\ref{system}) with initial data satisfying
$\exists \rho_1, \rho_2>0, \mu > 1: \forall j\geq n, j^{\mu}c_j(0)\in [\rho_1, \rho_2]$.
Assume the notation introduced in Theorem~\ref{inputmonomers}.
Then, as $j,\,\tau \rightarrow +\infty,$ with $\eta=j/\tau\neq 1$ fixed,
\begin{eqnarray}
 \lefteqn{
\left|\left(\frac{n\T}{\alpha}\right)^\frac{n-1}{n}
\widetilde{c}_j(\tau) - \Phi_1(\eta)\right|\;\; \sim} \qquad\mbox{}  \label{convratelhs}  \\
& \sim & \;\;\;(n-1)\left(1-\frac{1}{n}\right)(1-\eta)^{-(n-1)/n}
\frac{\log((1-\eta)\tau)}{(1-\eta)\tau}\mathds{1}_{(0,1)}(\eta) + \label{convrate1} \\ 
 & & + \left(\frac{n}{\alpha}\right)^{(n-1)/n}\!\!\mathcal{O}(1)\;
 \eta^{-\mu}\tau^{\frac{n-1}{n}-\mu}\mathds{1}_{(1,\infty)}(\eta).   \label{convrate2} 
\end{eqnarray}
\end{theorem}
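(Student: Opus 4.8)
The plan is to exploit the decoupling induced by the time change $\tau$ and to read off the rate from an explicit integral representation of $\widetilde{c}_j$; the logarithm in (\ref{convrate1}) will come from a refined expansion of the monomer concentration, and the memory term (\ref{convrate2}) from the initial-data part of that representation. First I would rewrite (\ref{system}) in the variable $\tau$: since $d\tau/dt=c_1$, one gets $\frac{d}{d\tau}\widetilde{c}_1=\frac{\alpha}{\widetilde{c}_1}-n\widetilde{c}_1^{\,n-1}-\sum_{i\ge n}\widetilde{c}_i$, $\frac{d}{d\tau}\widetilde{c}_n=\widetilde{c}_1^{\,n-1}-\widetilde{c}_n$, and $\frac{d}{d\tau}\widetilde{c}_j=\widetilde{c}_{j-1}-\widetilde{c}_j$ for $j\ge n+1$. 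Summing the cluster equations and telescoping shows that the bulk density $S:=\sum_{i\ge n}\widetilde{c}_i$ obeys $S'=\widetilde{c}_1^{\,n-1}$, so that $(\widetilde{c}_1,S)$ solves a closed two-dimensional nonlinear system while $(\widetilde{c}_j)_{j\ge n}$ solves a lower-triangular infinite linear system once $\widetilde{c}_1(\cdot)$ is known. Writing $\widetilde{c}_j=e^{-\tau}u_j$ turns the latter into $u_n'=e^{\tau}\widetilde{c}_1^{\,n-1}$ and $u_j'=u_{j-1}$, $j\ge n+1$, whence iterated integration gives the representation
\begin{equation}
\widetilde{c}_j(\tau)=e^{-\tau}\sum_{k=n}^{j}\frac{\tau^{\,j-k}}{(j-k)!}\,\widetilde{c}_k(0)
+\int_0^{\tau}\frac{(\tau-\sigma)^{\,j-n}}{(j-n)!}\,e^{-(\tau-\sigma)}\,g(\sigma)\,d\sigma,
\label{eq:rep}
\end{equation}
with $g:=\widetilde{c}_1^{\,n-1}$; the kernel is, as a function of $\sigma$, a Gamma density of shape $j-n+1$ and unit rate, sharply concentrated around $\sigma^\ast=\tau-(j-n)\approx(1-\eta)\tau$.

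Next I would extract the refined long-time behaviour of $\widetilde{c}_1$ from the two-dimensional system by a center-manifold (slow manifold) reduction, as announced in the abstract. Introducing the fast variable $P:=\alpha/\widetilde{c}_1-S=\widetilde{c}_1'+n\widetilde{c}_1^{\,n-1}$ and slaving it to $\widetilde{c}_1$, the invariance equation solved order by order gives $\widetilde{c}_1'=-\widetilde{c}_1^{\,n+1}/\alpha+\frac{n(n-1)}{\alpha^2}\widetilde{c}_1^{\,2n+1}+\cdots$. The substitution $y=\widetilde{c}_1^{-n}$ then linearizes the leading balance into $y'=\frac{n}{\alpha}-\frac{n^2(n-1)}{\alpha^2 y}$, whose integration produces a logarithm and yields
\begin{equation}
\widetilde{c}_1(\tau)=\Big(\frac{\alpha}{n}\Big)^{1/n}\tau^{-1/n}
\Big(1+\frac{n-1}{n}\,\frac{\log\tau}{\tau}+\cdots\Big),
\label{eq:mono}
\end{equation}
recovering the monomer asymptotics of \cite{costin}. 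Raising to the power $n-1$ gives $g(\sigma)=(\alpha/n)^{(n-1)/n}\sigma^{-(n-1)/n}\big(1+\frac{(n-1)^2}{n}\frac{\log\sigma}{\sigma}+\cdots\big)$, so the logarithm is already present in the forcing of (\ref{eq:rep}).

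Finally I would evaluate (\ref{eq:rep}) asymptotically, distinguishing $\eta>1$ from $0<\eta<1$. For $\eta>1$ the mode $j-n$ of the kernel exceeds $\tau$, so the integral is negligible, while the Poisson weights $e^{-\tau}\tau^{j-k}/(j-k)!$ concentrate at $j-k\approx\tau$; combined with the hypothesis $c_k(0)\in[\rho_1 k^{-\mu},\rho_2 k^{-\mu}]$ this gives $\widetilde{c}_j(\tau)=\mathcal{O}(1)\,\eta^{-\mu}\tau^{-\mu}$ (the $\eta$-dependence absorbed into $\mathcal{O}(1)$), which upon multiplication by $(n\tau/\alpha)^{(n-1)/n}$ is exactly (\ref{convrate2}); this is the preservation of memory of the initial tail. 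For $0<\eta<1$ the initial-data sum is instead exponentially small and Laplace's method applied to the integral returns, to leading order, $g(\sigma^\ast)$, i.e. $\Phi_1(\eta)=(1-\eta)^{-(n-1)/n}$ after the rescaling, recovering Theorem~\ref{inputmonomers}. Carrying the expansion one order further, the scaled quantity equals $\Phi_1(\eta)$ times $\big(1+\frac{(n-1)^2}{n}\frac{\log((1-\eta)\tau)}{(1-\eta)\tau}+\cdots\big)$, the logarithmic correction being inherited from $g$ at $\sigma^\ast\approx(1-\eta)\tau$; since $(n-1)(1-\frac{1}{n})=\frac{(n-1)^2}{n}$, this is precisely (\ref{convrate1}).

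The hard part will be this last step: pushing Laplace's evaluation of the convolution in (\ref{eq:rep}) to second order with rigorous, uniform-in-$\eta$ remainder bounds, and verifying that the logarithm of (\ref{eq:mono}) is the genuine leading correction while every competitor is strictly smaller -- the kernel's variance (a higher-order Stirling effect) contributing only $\mathcal{O}(1/\tau)$, the non-logarithmic $\mathcal{O}(1/\tau)$ terms in (\ref{eq:mono}) being subdominant, and the $\mathcal{O}(1)$ shift in $\sigma^\ast=(1-\eta)\tau+\mathcal{O}(1)$ leaving $\log((1-\eta)\tau)$ unchanged at leading order. Keeping these estimates uniform as $\eta\to1^\pm$, while excluding the critical value $\eta=1$, is the delicate bookkeeping that produces (\ref{convrate1})--(\ref{convrate2}) with the stated constants.
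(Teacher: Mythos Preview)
Your plan is correct and matches the paper's proof in all its essential ingredients: the representation formula \eqref{eq:rep} is exactly (\ref{cjtausolution}); your refined monomer expansion \eqref{eq:mono} is Theorem~\ref{teo:longtime}; the integral term is handled by Laplace's method around $\sigma^\ast\approx(1-\eta)\tau$ (the paper organizes this as the decomposition $J_1+\cdots+J_4$ in (\ref{Int1})--(\ref{Int4}), with Stirling supplying the $\mathcal{O}(1/\tau)$ competitor you mention); and the initial-data sum is treated by the same Poisson-concentration argument, split according to $\nu\gtrless 1$.

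The one place where your route differs is the derivation of \eqref{eq:mono}: you carry out the slow-manifold reduction directly in the $\tau$ variable, whereas the paper first passes to an auxiliary time $\zeta=\int dt/c_1$ in which the $(x,v)$ system has a genuine smooth fixed point at the origin, applies the center-manifold theorem there, and then converts back through $t$ to $\tau$ (Lemmas~\ref{le:zetat}--\ref{lem:ttau}). Your shortcut gives the right formal answer, but note that the $\tau$-system has $\alpha/\widetilde{c}_1$ singular at the limit, so the rigorous justification of the $o(\log\tau/\tau)$ remainder---which, as the paper's Remark~2 stresses, must be a genuine little-$o$ and not merely an asymptotic equivalence---is precisely what the detour through $\zeta$ buys.
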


What we find interesting in this result is that, although Theorem~\ref{inputmonomers} states that,
in the similarity variable $\eta,$ solutions to (\ref{system}) approach a universal profile independent of the initial condition
(indeed this universal behaviour is what is physically relevant in this type of enquiry), the
rate at which this limit behaviour is approached still preserves some information about the initial
condition, namely the rate of decay of the initial datum for large cluster sizes, $\mu$, can still be
computed from the observation of the decay rate of scaled solutions to $\Phi_1(\eta),$ for $\eta >1.$


\section{Preliminaries}\label{sec:prelim}

Our approach to the study of system (\ref{system}) follows the one used in \cite{crw, costin} and consists of 
the exploration of the following two observations:

\begin{romannum}
\item First note that the equation for $c_1$ depends only on $c_1$ and on the ``bulk'' quantity $y(t):=\sum_{j=n}^{\infty}c_j(t)$, which 
(formally) satisfies the differential equation $\dot{y}=c_1^n$. Thus, the definition of this bulk variable allow
us to decouple the resulting infinite dimensional system into a closed two-dimensional system for the 
monomer--bulk variables $(c_1, y)$,
from which we get all the needed information about the behaviour of $c_1$.
\item Secondly, the remaining equations for \(c_j\), with \(j\geq n\), depend only on those same variables $c_j$, and on $c_1$.
However, the way they depend on $c_1$ is such that, by an appropriate change of the time variable, the system is transformed
into a linear lower triangular infinite system of ordinary diferential equations, which can be recursively solved,
in terms of \(c_1\), using the variation
of constants formula.
\end{romannum}

Let us look more closely at the details of  (ii) first:
writing the second and third equations of (\ref{system}) in the form
\begin{equation}
  \left\{
  \begin{array}{l}
  {\dot{c}_{n}}=c_1({c_1}^{n-1}-c_n)\\
  \dot{c}_{j}=c_1(c_{j-1}-c_j), \;\;\; j \geq n+1,
  \end{array}
  \right.\label{eq:bixc}
\end{equation}
it is natural to introduce a new time scale
\begin{equation}\label{eq:tau}
\tau(t):=\int_0^t\!c_1(s)\,ds,
\end{equation}
along with scaled variables
\begin{equation}\label{eq:ctilde}
\widetilde{c_1}(\tau):=c_1(t(\tau)) \;\;\; \mbox{\rm   and }\;\;\;  \widetilde{c}_j(\tau):=c_j(t(\tau)), \;j\geq n+1,
\end{equation}
where $t(\tau)$ is the inverse function of $\tau(t)$.
With this new time  (\ref{eq:bixc}) reads 
\begin{equation}
  \left\{
  \begin{array}{l}
   {\widetilde{c}_n}{'}=\widetilde{c_1}^{n-1}-\widetilde{c}_n\\
 {\widetilde{c}_j}{'}=\widetilde{c}_{j-1}-\widetilde{c}_j,\;\;\; j \geq n+1,
  \end{array}
  \right.\label{eq:bixctil}
 \end{equation}
where \((\cdot)'=d/d\tau\).
The equation for \( {\widetilde{c}_n}\) can readily be solved in terms of \(\widetilde{c_1}\), 
\[
{\widetilde{c}_n}(\T)=e^{-\T}{\widetilde{c}_n}(0)+\int_0^\T e^{-(\T-s)}\widetilde{c_1}^{n-1}(s)ds=e^{-\T}{\widetilde{c}_n}(0)+\int_0^\T e^{-s}\widetilde{c_1}^{n-1}(\T-s)ds,
\]
and then we can use this solution to solve the \(c_{n+1}\) equation, and afterwards solve recursively for \(j\geq n+2\). 
The following expression for \(\widetilde{c}_j(\T)\) is obtained in \cite{costin},  
generalizing an analogous result first proved in \cite{crw} for the special case \(n=2\):

\begin{eqnarray}
\widetilde{c}_j(\T) &=& e^{-\T}\sum_{k=n}^{j}\frac{\T^{j-k}}{(j-k)!}c_k(0) +
\frac{1}{(j-n)!}\int_0^{\T}\left(\widetilde{c}_1(\T-s)\right)^{n-1}s^{j-n}e^{-s}ds,\;\;\; \forall j\geq n. \nn \\  
& & \label{cjtausolution}
\end{eqnarray}

Now, the information about $\widetilde{c}_1$ needed to study (\ref{cjtausolution}) has to be extracted from the two-dimensional system for $(c_1,y)$
refered to in observation (i). Let us see this with a bit more detail. Denoting \( x(t):= c_{1}(t)\) and \( y(t):=\sum_{j=n}^{\infty}c_{j}(t) \)
one can easily conclude that these quantities (formally) satisfy the two-dimensional system
\begin{equation}
  \left\{
  \begin{array}{l}
 {\dot{x}}=\alpha -nx^n-xy\\
 {\dot{y}}=x^n.
  \end{array}
  \right.\label{eq:xy}
\end{equation}

It can be proved \cite[Theorem 1]{costin} that if \(\sum_{j=n}^\infty c_j(0) <\infty\), then any solution of system 
(\ref{eq:bixctil})--(\ref{eq:xy})
is also a solution of system (\ref{system}).

Clearly, the monomer--bulk system (\ref{eq:xy}) form a closed two-dimensional system the solutions of which can be studied
independently of what happens to the infinite system (\ref{eq:bixctil}). In particular, the needed information about 
$\widetilde{c}_1$ in (\ref{cjtausolution}) will be obtained from (\ref{eq:xy}).


\section{Center manifold analysis of the monomer--bulk system}\label{sec:cma}

In this section we will study the long time behaviour of solutions to (\ref{eq:xy}).
The following result, proved in \cite{costin}, holds:

\begin{proposition}{\cite[Proposition 1]{costin}}\label{prop1costin}
Let \( (x(\cdot),y(\cdot)) \) be any solution of (\ref{eq:xy}) with non-negative initial condition. Then:
 \begin{romannum}
  \item \( (x(t),y(t)) \) is positive when $t>0.$
  \item \( (x(t),y(t)) \) exists for all \(t>0\).
  \item \(x(t)\to 0, \mbox{   as } t\to+\infty\).
  \item \(y(t)\to +\infty, \mbox{   as } t\to+\infty\).
  \item \(\alpha -xy\to 0, \mbox{  as }  t\to+\infty\).
 \end{romannum}
\end{proposition}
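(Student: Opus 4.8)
The plan is to treat (\ref{eq:xy}) as a planar ODE and to establish the five assertions by elementary invariance and comparison arguments, proving them in the logical order (i), (ii), (iv), (iii), (v) rather than as listed. First I would prove positivity (i): on the boundary of the closed first quadrant the vector field points inward or is tangent, since $\dot{x}=\alpha>0$ whenever $x=0$ and $\dot{y}=x^n\ge 0$ whenever $y=0$; hence the open first quadrant is entered immediately and is positively invariant, so $x(t),y(t)>0$ for $t>0$. Global existence (ii) then follows from a priori bounds: in the first quadrant $nx^n+xy\ge 0$ gives $\dot{x}\le\alpha$, so $x(t)\le x(0)+\alpha t$ grows at most linearly; consequently $\dot{y}=x^n$ is bounded on bounded intervals and $y$ grows at most polynomially. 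Neither component can blow up in finite time, so the maximal interval of existence is $[0,+\infty)$.

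Next I would establish $y\to+\infty$ (iv), which underlies both (iii) and (v). Since $\dot{y}=x^n\ge 0$, the function $y$ is nondecreasing and converges to some $L\in(0,+\infty]$. Suppose for contradiction that $L<\infty$. Then $y(t)\le L$ for all $t$, so $\dot{x}=\alpha-nx^n-xy\ge \alpha-nx^n-Lx=:g(x)$. The scalar function $g$ is strictly decreasing with $g(0)=\alpha>0$, hence has a unique positive root $x_\ast$, which is a globally attracting equilibrium of $\dot{u}=g(u)$ from positive data. A standard differential-inequality comparison then gives $\liminf_{t\to\infty}x(t)\ge x_\ast>0$, so $\dot{y}=x^n$ is eventually bounded below by a positive constant and $\int^{\infty}\dot{y}=+\infty$, contradicting $L<\infty$. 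Hence $y\to+\infty$. Assertion (iii) is then quick: given $M>0$, choose $T$ with $y(t)>M$ for $t>T$; dropping the nonpositive term $-nx^n$ yields $\dot{x}\le\alpha-xy\le\alpha-Mx$, and comparison with $\dot{u}=\alpha-Mu$ gives $\limsup_{t\to\infty}x(t)\le\alpha/M$. Letting $M\to\infty$ and using $x>0$ forces $x(t)\to 0$.

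The delicate point, and the step I expect to be the main obstacle, is the quasi-steady-state relation (v). Writing it as $\alpha-xy=\dot{x}+nx^n$ and using $nx^n\to 0$ from (iii), it suffices to prove $\dot{x}\to 0$; this does not follow from $x\to 0$ alone, and a Barbalat-type argument stumbles because $\ddot{x}$ need not be bounded (it contains $y\dot{x}$ with $y\to\infty$). The key idea is instead to differentiate the $x$-equation and read the result as a strongly damped linear ODE for $\phi:=\dot{x}$:
\[
\dot{\phi}=-\left(y+n^2x^{n-1}\right)\phi - x^{n+1},
\]
where the damping coefficient $\beta(t):=y(t)+n^2x(t)^{n-1}\to+\infty$ by (iv) and the forcing $x^{n+1}\to 0$ by (iii). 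Solving by variation of constants and estimating, for any $M>0$ and $\ve>0$ I choose $T$ with $\beta\ge M$ and $x^{n+1}\le\ve$ on $[T,\infty)$; then $|\phi(t)|\le e^{-M(t-T)}|\phi(T)|+\ve/M$, so $\limsup_{t\to\infty}|\dot{x}|\le\ve/M$. As $\ve$ is arbitrary this yields $\dot{x}\to 0$, and therefore $\alpha-xy=\dot{x}+nx^n\to 0$, which is (v). The main technical care needed is in justifying the comparison principles of (iii)--(iv) and in controlling the variation-of-constants integral uniformly as $\beta$ grows, but no finer structure of (\ref{eq:xy})---in particular no center manifold---is required for these qualitative limits.
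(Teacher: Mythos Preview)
The paper does not actually prove this proposition: it is quoted verbatim from \cite{costin} and invoked without argument, so there is no ``paper's own proof'' to compare against. Your proposal therefore supplies something the paper omits, and it is correct. The positivity and global existence arguments (i)--(ii) are standard; the contradiction argument for (iv) via comparison with $\dot u=\alpha-nu^n-Lu$ is clean; and the deduction of (iii) from (iv) by the linear comparison $\dot x\le\alpha-Mx$ is fine.

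Your handling of (v) is the genuinely nontrivial part and is a nice self-contained alternative to whatever is in \cite{costin}. Differentiating the $x$-equation to obtain $\dot\phi=-(y+n^2x^{n-1})\phi-x^{n+1}$ and then exploiting that the damping $\beta(t)=y+n^2x^{n-1}\to\infty$ while the forcing $x^{n+1}\to 0$ is exactly the right idea; the variation-of-constants bound $|\phi(t)|\le e^{-M(t-T)}|\phi(T)|+\ve/M$ is valid because for any prescribed $M$ and $\ve$ one can choose $T$ large enough that both $\beta\ge M$ and $x^{n+1}\le\ve$ hold on $[T,\infty)$. One small wording point: in the last line you write ``as $\ve$ is arbitrary'', but what you actually use is that $\ve/M$ can be made arbitrarily small by sending $\ve\to 0$ (or $M\to\infty$); since $M$ and $\ve$ can be chosen independently this is fine, just phrase it as ``since $M$ and $\ve$ are arbitrary''.
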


The next result establishes the rate of decay of \(x(t)\) as $t\to\infty$.

\begin{theorem}{\cite[Corrected Eq. (25)]{costin}}\label{teo:ctlimit}
Let \( (x(\cdot),y(\cdot)) \) be any non-negative solution of (\ref{eq:xy}). Then, 
 \beq
 x(t) = \left(\frac{\alpha}{n+1}\right)^{\frac{1}{n+1}}t^{-\frac{1}{n+1}} + \frac{n(n-1)}{n+1}t^{-1}
 + o(t^{-1}), \quad\mbox{ as $t\to +\infty$}. \label{ctlimit}
 \eeq
\end{theorem}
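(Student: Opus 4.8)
The plan is to exploit the slow--fast structure of the planar system (\ref{eq:xy}) and to reduce the asymptotics of $x(t)$ to the integration of a scalar equation on a one-dimensional invariant manifold, which is the role played here by a center manifold. By Proposition~\ref{prop1costin} any non-negative solution satisfies $x\to 0$, $y\to +\infty$ and $xy\to\alpha$ as $\trmi$; thus the trajectory is funnelled towards the curve $xy=\alpha$, along which $y$ becomes slaved to $x$. I would therefore look for an attracting invariant manifold of the form $y=Y(x)$ and then study the induced scalar flow $\dot x=\alpha-nx^n-xY(x)$.

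First I would construct $Y$ as an asymptotic expansion near $x=0$. Invariance of the graph $y=Y(x)$ under (\ref{eq:xy}) is expressed by $Y'(x)\bigl(\alpha-nx^n-xY(x)\bigr)=x^n$, which can be solved recursively in powers of $x$. The requirement $xy\to\alpha$ forces the leading term $Y(x)\sim\alpha/x$, and continuing the expansion gives
\[
Y(x)=\frac{\alpha}{x}-nx^{n-1}+\frac{1}{\alpha}x^{n+1}+\cdots .
\]
Substituting back, the two leading contributions to $\alpha-nx^n-xY(x)$ cancel and the reduced scalar equation becomes $\dot x=-\frac{1}{\alpha}x^{n+2}+\cdots$. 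Integrating this leading balance already reproduces the first term $\left(\frac{\alpha}{n+1}\right)^{1/(n+1)}t^{-1/(n+1)}$ of (\ref{ctlimit}).

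To reach the $t^{-1}$ term I would carry both the manifold expansion and the integration of the scalar equation one order further. Linearizing the reduced equation about the leading profile $x_0(t)=\left(\frac{\alpha}{n+1}\right)^{1/(n+1)}t^{-1/(n+1)}$ shows that homogeneous perturbations decay like $t^{-(n+2)/(n+1)}=o(t^{-1})$; hence the $t^{-1}$ contribution cannot arise from the (initial-condition dependent) homogeneous solution, but is instead a forced particular solution, which is precisely why its coefficient is universal. Matching the forcing produced by the next term of the reduced equation against $x_0$ then pins down the coefficient $\frac{n(n-1)}{n+1}$, and no logarithm appears because the forcing exponent is non-resonant with the homogeneous decay.

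The main obstacle is making this reduction rigorous. The equilibrium governing the long-time dynamics sits ``at infinity'' ($x=0$, $y=+\infty$), where the vector field (\ref{eq:xy}) is singular, so the center manifold theorem cannot be applied verbatim. I would handle this by a desingularizing change of variables --- for instance passing to $y$, which is strictly increasing since $\dot y=x^n>0$, as the independent variable, together with the rescaling suggested by $xy\to\alpha$, or an equivalent blow-up --- turning the problem into a genuine hyperbolic/center configuration at a finite fixed point to which invariant-manifold theory and the attractivity estimates apply. The remaining delicate points are the rigorous control of the $o(t^{-1})$ remainder, as opposed to a purely formal expansion, and the careful second-order bookkeeping needed to certify the exact value of the $t^{-1}$ coefficient.
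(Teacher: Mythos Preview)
Your plan is correct and is, at its core, the same strategy the paper uses: reduce to a one-dimensional flow on an attracting invariant (center) manifold near the degenerate equilibrium and integrate the resulting scalar equation to two orders. Your graph $y=Y(x)=\alpha/x-nx^{n-1}+\alpha^{-1}x^{n+1}+\cdots$ is exactly the paper's center manifold $v=\phi_n(x)$ rewritten via $v=\alpha-xy$, and your reduced equation $\dot x=-\alpha^{-1}x^{n+2}+n(n-1)\alpha^{-2}x^{2n+2}+\cdots$ is the paper's center-manifold flow transported back from the auxiliary time. Two points of comparison are worth noting. First, the desingularization the paper actually carries out is precisely $v:=\alpha-xy$ together with the time change $\zeta:=\int_0^t x(s)^{-1}\,ds$, which brings the ``equilibrium at infinity'' to the origin of a smooth planar system with linear part $\mathrm{diag}(0,-\alpha)$, so that the standard center manifold theorem applies directly and yields the exponential attraction you need; your suggestion of using $y$ as the independent variable would work too, but this choice is particularly clean. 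Second, for the second-order term the paper integrates the separable scalar equation in $\zeta$ and then converts $\zeta\leftrightarrow t$, whereas you propose linearizing about the leading profile $x_0(t)$; both produce the coefficient $n(n-1)/(n+1)$, but the paper stresses (see its Remark following the statement) that a genuine two-pass argument is required to obtain an honest $o(t^{-1})$ remainder rather than merely an asymptotic equivalence, and your linearization would need the same care---controlling the homogeneous part and the higher-order forcing uniformly, not just formally---to close the $o(t^{-1})$ estimate.
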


\begin{remark}
 In \cite{costin} the coefficient of the $t^{-1}$ term is slightly different from that
 in (\ref{ctlimit}), in what we believe to be a minor mistake.
\end{remark}

\begin{proof} (of Theorem~\ref{teo:ctlimit}).
The proof of this result in \cite{costin} was done using asymptotic methods. Here, we
follow the idea of the proof of the case $n=2$ in \cite[Proposition 3.2]{crw}, which uses
center manifold methods to obtain the first term in the right-hand side of (\ref{ctlimit}). 
The main difference relative to \cite{crw} being that now we need to use a higher order approximation
to the center manifold in order to obtain the expansion with an additional term, (\ref{ctlimit}).
However, somewhat surprisingly, this has to be done in a two step process, as explained further down.

Let us define a new variable \(v(t):=\alpha -x(t)y(t)\), so that \((x(t),v(t))\to (0,0)\) as \(t\to+\infty\).

By changing variables $(x, y)\mapsto (x, v)$ system (\ref{eq:xy}) becomes
\begin{equation}
\left\{\begin{array}{l}
 \dot{x}=v-nx^n\\
  \displaystyle{\dot{v}}=-\frac{\alpha v}{x}-x^{n+1}+\frac{v^2}{x}+\alpha nx^{n-1}-nvx^{n-1}.
  \end{array}
  \right.\label{eq:vx}
\end{equation}
Suppose $x(0)\neq 0$. Otherwise, by Proposition~\ref{prop1costin} (i) and (ii)
we know that $x(t)>0$ for all $t>0,$ and so just redefine time so
that $x(0)$ becomes positive.
Now change the time scale
\begin{equation}
t\mapsto \zeta = \zeta(t):= \int_0^t\frac{1}{x(s)}ds,\label{tzeta}
\end{equation}
and define $\left(\widetilde{x}(\zeta),\widetilde{v}(\zeta))\right):= \left(x(t(\zeta)), v(t(\zeta))\right),$
where $t(\zeta)$ is the inverse function of $\zeta(t)$. By Proposition \ref{prop1costin}(iii),
we have $x(t)\rightarrow 0$ as $\trmi$, and so also $\zeta\rightarrow +\infty$ as $\trmi.$ 
With the new time scale system (\ref{eq:vx}) becomes
\begin{equation}
\left\{\begin{array}{l}
 \widetilde{x}'=\widetilde{v}\widetilde{x}-n\widetilde{x}^{n+1}\\
 \widetilde{v}'=-\alpha\widetilde{v}-\widetilde{x}^{n+2}+\widetilde{v}^2+\alpha n \widetilde{x}^n-n \widetilde{v}\widetilde{x}^n,
  \end{array}
  \right.\label{eq:vxtil}
\end{equation}
where  $(\cdot)' = d/d\zeta$, or in matrix form
\begin{equation}
\left[
 \begin{array}{c}
  \widetilde{x}\\ \widetilde{v}
 \end{array} \right]'
 =
 \left[
 \begin{array}{cc}
  0&0\\
  0&-\alpha
 \end{array} \right]
 \left[
 \begin{array}{c}
  \widetilde{x}\\\widetilde{v}
 \end{array} \right]
+
\left[
\begin{array}{c}
  \widetilde{v}\widetilde{x}-n\widetilde{x}^{n+1}\\
  -\widetilde{x}^{n+2}+\widetilde{v}^2+\alpha n \widetilde{x}^n-n \widetilde{v}\widetilde{x}^n
 \end{array}.
 \right]\label{tildevx}
\end{equation}

Since, by Proposition~\ref{prop1costin} (iii)--(v), we know that when $t\to +\infty$ 
all non-negative solutions $(x,y)$ of system
(\ref{eq:xy}) converge to $(0,\infty)$ and $v\to 0$, the transformation above imply that the corresponding
solutions $(\widetilde{x}, \widetilde{v})$ of (\ref{tildevx}) converge to $(0,0)$ when $\zeta\to +\infty.$

Using standard results in center manifold theory \cite[chap.\ 2]{c81} it is straightforward to conclude that 
(\ref{tildevx}) has a center manifold in a neighbourhood of the origin,
that locally exponentially  attracts all orbits, and is the graph of
a function $\widetilde{v}= \phi_n(\widetilde{x})$, where 
\begin{equation}
\phi_n(\widetilde{x}) = n\widetilde{x}^{\,n} -
\frac{1}{\alpha}\widetilde{x}^{\,n+2} +\frac{n(n-1)}{\alpha^{2}}\widetilde{x}^{\,2n+2}  - \frac{n+1}{\alpha^3}\widetilde{x}^{\,2n+4}
+ {\mathcal O}(\widetilde{x}^{\,3n+2}).\label{cmfunction}
\end{equation}

In order to get an expression for the behaviour of solutions $\widetilde{x}(\zeta)$ for large values of $\zeta$, from which (\ref{ctlimit})
can be deduced, we need to proceed in two steps: first, using the first two terms in the expression of the center manifold $\phi_n$
and the knowledge that $\widetilde{x}(\zeta)\to 0$ as $\zeta\to +\infty$, we obtain a first, ``lower order'', expression for the 
long-time behaviour of $\widetilde{x}(\zeta)$. Then, using the next term in the right-hand side of (\ref{cmfunction}) and this ``lower order'' information,
we obtain a ``higher order'' correction to the long-time behaviour of $\widetilde{x}(\zeta)$ which will be sufficient to prove (\ref{ctlimit}).

\begin{remark}
It is worth noting that the two stage process just described cannot be abbreviated to a single step, taking from the start the first
three terms in $\phi_n$ and the knowledge that $\widetilde{x}\to 0$ as $\zeta\to +\infty$. Doing this one would arrive at   
(\ref{ctlimit}) with the equality substituted by an asymptotic equality, $\sim$, meaning that the long-time
limit of the ratio of the left-hand side of (\ref{ctlimit})
by its right-hand side is equal to $1$. Note that this is a weaker result than the one stated in (\ref{ctlimit}), and, in fact, 
it is not strong enough to prove our main result, Theorem~\ref{teo:main}.
\end{remark}

Let us start to implement the idea described above. Considering the approximation to the center manifold given by 
taking only the fist two terms in the right-hand side of (\ref{cmfunction}),
\[
 \phi_n(\widetilde{x}) = n\widetilde{x}^{n} -
\frac{1}{\alpha}\widetilde{x}^{n+2} +{\mathcal O}(\widetilde{x}^{2n+2}),
\]
the dynamics on the center manifold is given by
\[
\widetilde{x}_c' = - \frac{1}{\alpha}\widetilde{x}_c^{n+3} +
{\mathcal O}(\widetilde{x}_c^{2n+3})\quad\mbox{\rm as}\quad\zeta\rightarrow +\infty,
\]
which can be written as
\begin{equation}
 \frac{d}{d\zeta}\left(\frac{\alpha}{(n+2)\widetilde{x}_{c}^{n+2}}\right)= 1+\mathcal{O}(\widetilde{x}_c^n),\quad 
\mbox{\rm  as }\zeta\rightarrow +\infty.\label{diff_1storder_1}
\end{equation}
Knowing that $\widetilde{x}(\zeta)\to 0$ as $\zeta\to + \infty$, the right-hand side of this equation converges
to $1$ and we conclude that, for all $\ve>0,$ there exists a $T>\zeta_0$ such that, for all $\zeta>T,$ 
the following  inequalities hold
$$
1-\varepsilon \leq\frac{d}{d\zeta}\left(\frac{\alpha}{(n+2)\widetilde{x}_{c}^{n+2}}\right) \leq1+\varepsilon.
$$ 
Integrating these differential inequalities between $T$ and $\zeta$ we get
\begin{equation}
(1-\ve)(\zeta -T) \leq\frac{\alpha}{(n+2)\widetilde{x}_{c}^{n+2}} - \frac{\alpha}{(n+2)\widetilde{x}_{T}^{n+2}} \leq 
(1+\ve)(\zeta -T),\label{diff_1storder_2} 
\end{equation}
where $\widetilde{x}_T =\widetilde{x}_c(T).$ 
Dividing (\ref{diff_1storder_2}) by $\zeta$ and taking  $\displaystyle{\liminf_{\zeta\to+\infty}}$
and $\displaystyle{\limsup_{\zeta\to+\infty}}$ we obtain
\[
 1-\ve \leq\liminf_{\zeta\to+\infty}\frac{\alpha}{(n+2)\,\zeta\,\widetilde{x}_{c}^{n+2}} \leq
 \limsup_{\zeta\to+\infty}\frac{\alpha}{(n+2)\,\zeta\,\widetilde{x}_{c}^{n+2}} \leq1+\ve,
\]
which, due to the arbitrariness of $\ve$, means that 
\(
 \lim_{\zeta\to +\infty}\textstyle{\frac{n+2}{\alpha}}\,\zeta\,\widetilde{x}_{c}(\zeta)^{n+2} = 1,
\)
and thus
\begin{equation}
 \widetilde{x}_c^{n+2}(\zeta) = \frac{\alpha}{n+2}\frac{1}{\zeta}(1+ o(1))\quad\mbox{\rm as}\quad\zeta\to + \infty.\label{1storder}
\end{equation}

Let us now consider a better approximation to the center manifold, obtained by considering a further term in the right-hand side of (\ref{cmfunction}),
namely, 
\[
 \phi_n(\widetilde{x}) = n\widetilde{x}^{n} -
\frac{1}{\alpha}\widetilde{x}^{n+2} +\frac{n(n-1)}{\alpha^{2}}\widetilde{x}^{2n+2} +{\mathcal O}(\widetilde{x}^{2n+4}).
\]
The dynamics on the center manifold is now given by 
\beq
\widetilde{x}_c' =-
\frac{1}{\alpha}\widetilde{x}_c^{n+3} +\frac{n(n-1)}{\alpha^{2}}\widetilde{x}_c^{2n+3} 
+{\mathcal O}(\widetilde{x}_c^{2n+5})\quad\mbox{\rm as}\quad\zeta\rightarrow
+\infty.\label{diff}
\eeq
Writing this differential equation as
\[
 \frac{\alpha^2 \widetilde{x}_c'}{-\alpha\widetilde{x}_c^{n+3} +n(n-1)\widetilde{x}_c^{2n+3}}
 = 1+\mathcal{O}(\widetilde{x}_c^{n+2}),\quad 
\mbox{\rm  as }\zeta\rightarrow +\infty,
\]
and observing that
\[
 \int \frac{\alpha^2}{-\alpha{s}^{n+3} +n(n-1)s^{2n+3}} ds=
\frac{\alpha}{(n+2)s^{n+2}}+\frac{n(n-1)}{2s^2} + \psi_n(s),
\]
where 
\[ \psi_{n}(s):=n(n-1)\int \frac{s^{n-3}}{s^n-\frac{\alpha}{n(n-1)}}ds,\]
we conclude that (\ref{diff}) can be written as 
\begin{equation}
 \frac{d}{d\zeta}\left(\frac{\alpha}{(n+2)\widetilde{x}_{c}^{n+2}}+\frac{n(n-1)}{2\widetilde{x}_{c}^{2}}+
 \psi_{n}(\widetilde{x}_{c})\right)= 1+\widetilde{x}_c^{n+2}\mathcal{O}(1),\;
\mbox{\rm  as }\zeta\rightarrow +\infty.\label{5'}
\end{equation}

As in the previous approximation (\ref{diff_1storder_1}), we now need to estimate the right-hand side of
(\ref{5'}), but instead of using only the information that $\widetilde{x}_c(\zeta)\rightarrow 0$ as $\zeta\to+\infty,$ 
we shall use (\ref{1storder}).

Observing that there exist constants $K^* \geq K_*$ such that the right-hand side of (\ref{5'}) can be bounded by
$1+K_*\widetilde{x}_c^{n+2} \leq1+ \widetilde{x}_c^{n+2}\mathcal{O}(1)\leq1+K^*\widetilde{x}_c^{n+2},$
we can write, for all $\zeta > T$,
\begin{equation}
 1+K_*\widetilde{x}_c^{n+2} \leq\frac{d}{d\zeta}\left(\frac{\alpha}{(n+2)\widetilde{x}_{c}^{n+2}}+\frac{n(n-1)}{2\widetilde{x}_{c}^{2}}+
 \psi_{n}(\widetilde{x}_{c})\right) \leq 1+K^*\widetilde{x}_c^{n+2}.\nn
\end{equation}
Integrating these differential inequalities between $T$ and $\zeta>T$ and denoting $\widetilde{x}_c(T)$ by $\widetilde{x}_T,$ we get
\begin{eqnarray}
\lefteqn{\zeta-T + K_*\int_T^\zeta\widetilde{x}_{c}^{n+2}(s)ds \leq} \nonumber \\ & \leq & 
\left(\textstyle{\frac{\alpha}{(n+2)\widetilde{x}_{c}^{n+2}}}+\textstyle{\frac{n(n-1)}{2\widetilde{x}_{c}^2}}+\psi_{n}(\widetilde{x}_{c})\right) 
  - \left(\textstyle{\frac{\alpha}{(n+2)\widetilde{x}_{T}^{n+2}}}+\textstyle{\frac{n(n-1)}{2\widetilde{x}_{T}^2}}+\psi_{n}(\widetilde{x}_{T})\right),\label{1_5''}
\end{eqnarray}
and
\begin{eqnarray}
\lefteqn{\zeta-T + K^*\int_T^\zeta\widetilde{x}_{c}^{n+2}(s)ds \geq}  \nonumber \\ & \geq & 
\left(\textstyle{\frac{\alpha}{(n+2)\widetilde{x}_{c}^{n+2}}}+\textstyle{\frac{n(n-1)}{2\widetilde{x}_{c}^2}}+\psi_{n}(\widetilde{x}_{c})\right) 
  - \left(\textstyle{\frac{\alpha}{(n+2)\widetilde{x}_{T}^{n+2}}}+\textstyle{\frac{n(n-1)}{2\widetilde{x}_{T}^2}}+\psi_{n}(\widetilde{x}_{T})\right).\label{5''}
\end{eqnarray}
We now use (\ref{1storder}) to estimate the integral of $\widetilde{x}_{c}^{n+2}:$ Let $\ve>0$ be fixed arbitrarily, and,
if necessary, redefine $T$ such that $\frac{n+2}{\alpha}\,\zeta\,\widetilde{x}_c^{n+2}(\zeta) \in [1-\ve, 1+\ve],$ for all $\zeta >T.$
Thus
\[
 \frac{\alpha}{n+2}(1-\ve)(\log\zeta -\log T) \leq\int_T^\zeta\widetilde{x}_{c}^{n+2}(s)ds \leq\frac{\alpha}{n+2}(1+\ve)(\log\zeta -\log T).
\]
On the other hand, to estimate $\psi_n(\widetilde{x}_{c}(\zeta))$ as $\zeta\to +\infty$ first observe that, defining
$y:=\left(\frac{n(n-1)}{\alpha}\right)^{1/n}s,$ 
we can write $\psi_{n}(y) = -\alpha\left(\frac{n(n-1)}{\alpha}\right)^{1+2/n}\int\frac{y^{n-3}}{1-y^n}dy.$ 
For $n\geq3$ the explicit expression of this last integral is known from \cite[Integrals 2.146-3, 2.146-4]{gr} 
and it is easily seen to be bounded when $y\to 0.$  The case $n=2$ is a
little trickier: we can easily compute
\[
    \psi_2(s) = 2\int\frac{s^{-1}}{s^2- \frac{\alpha}{2}}ds =  
    -\frac{4}{\alpha}\log\frac{\left(\frac{2}{\alpha}\right)^{1/2}s}{\sqrt{1-\frac{2}{\alpha}s^2}} =
    -\frac{4}{\alpha}\log s + {\mathcal O}(1),\qquad\mbox{\rm  as } s\to 0,
\]
and to evaluate $\psi_{n}(\widetilde{x}_{c}(\zeta))$ when $\zeta\to +\infty$ we now need to use (\ref{1storder}): 
in the case $n=2$, we have $\widetilde{x}_{c}(\zeta) = \left(\frac{4}{\alpha}\zeta\right)^{-1/4}(1+o(1))$
as $\zeta\to + \infty.$ Hence we conclude that, as $\zeta\to +\infty,$
\begin{eqnarray*}
 \psi_2(\widetilde{x}_{c}(\zeta)) & = & -\frac{4}{\alpha}\log
 \left(\frac{4}{\alpha}\zeta\right)^{-1/4} + {\mathcal O}(1)\\
 & = & \frac{1}{\alpha}\log\zeta + {\mathcal O}(1).
\end{eqnarray*}

Now we have the tools to estimate (\ref{1_5''}) and (\ref{5''}) for large values of $\zeta$.
Dividing (\ref{1_5''}) by $\zeta$, taking  $\displaystyle{\liminf_{\zeta\to+\infty}},$ and
noting that when taking this limit all the (constant) terms containing $\widetilde{x}_{T}$, after being divided by
$\zeta$, converge to zero,  we obtain
\begin{eqnarray}
\lefteqn{\liminf_{\zeta\to+\infty}\left( \frac{\alpha}{(n+2)\zeta\widetilde{x}_{c}^{n+2}}+
\frac{n(n-1)}{2\zeta\widetilde{x}_{c}^2}\right) \geq}\nonumber \\ 
& \geq& 
\liminf_{\zeta\to+\infty}\frac{1}{\zeta}\left(\zeta-T + 
\textstyle{\frac{K_*\alpha}{n+2}}(1-\ve)(\log\zeta -\log T) - \psi_n(\widetilde{x}_{c}(\zeta))\right) \nn \\
& = &  1 + \lim_{\zeta\to+\infty}\left(\textstyle{\frac{K_*\alpha}{n+2}}(1-\ve)\textstyle{\frac{\log\zeta}{\zeta}} - 
\textstyle{\frac{1}{\zeta}}\left(T + \textstyle{\frac{K_*\alpha}{n+2}}(1-\ve)\log T\right) - 
\textstyle{\frac{\psi_n(\widetilde{x}_{c}(\zeta))}{\zeta}}\right)\label{liminfcor} \\
& = &  1. \nn 
\end{eqnarray}

Analogously, dividing (\ref{5''}) by $\zeta$ and taking  $\displaystyle{\limsup_{\zeta\to+\infty}}$ we conclude that
\begin{eqnarray}
\lefteqn{\limsup_{\zeta\to+\infty}\left( \frac{\alpha}{(n+2)\zeta\widetilde{x}_{c}^{n+2}}+
\frac{n(n-1)}{2\zeta\widetilde{x}_{c}^2}\right) \leq}\nonumber \\ 
& \leq&  1 + \lim_{\zeta\to+\infty}\left(\textstyle{\frac{K^*\alpha}{n+2}}(1+\ve)\textstyle{\frac{\log\zeta}{\zeta}} - 
\textstyle{\frac{1}{\zeta}}\left(T + \textstyle{\frac{K^*\alpha}{n+2}}(1+\ve)\log T\right) - 
\textstyle{\frac{\psi_n(\widetilde{x}_{c}(\zeta))}{\zeta}}\right)\label{limsupcor} \\
& = &  1. \nn 
\end{eqnarray}

Thus, (\ref{liminfcor})--(\ref{limsupcor}) imply  the existence of a function $F_n$ such that $F_n(\zeta) = {\mathcal O}(1)$
for large $\zeta$, and
\begin{equation}
 \left( \frac{\alpha}{(n+2)\widetilde{x}_{c}^{n+2}}+\frac{n(n-1)}{2\widetilde{x}_{c}^2}\right)
\frac{1}{\zeta}=1 + \frac{\log\zeta}{\zeta}F_n(\zeta), \quad \mbox{\rm  as }\zeta\to+\infty. \label{xclim}
\end{equation}
Let \( \beta:=1/(n+2),\ A:=n(n-1)/2 \) and write (\ref{xclim}) as
\begin{equation}
 \alpha \beta + A\widetilde{x}_{c}^n = \zeta \widetilde{x}_{c}^{n+2} \left(1+ \frac{\log\zeta}{\zeta}F_n(\zeta) \right), 
 \quad \mbox{\rm  as } \zeta\to+\infty.
 \label{eq:xzeta}
\end{equation}
Since $\textstyle{\frac{\log\zeta}{\zeta}}F_n(\zeta)\to 0$ as $\zeta\to +\infty$ we can use the geometric series expansion to write
\[
 \left(1+ \frac{\log\zeta}{\zeta}F_n(\zeta) \right)^{-1} = 1 - \frac{\log\zeta}{\zeta}F_n(\zeta) +  
 {\mathcal O}\left(\left(\textstyle{\frac{\log \zeta}{\zeta}}\right)^2\right),\quad \mbox{\rm  as } \zeta\to+\infty,
\]
and thus, from (\ref{eq:xzeta}), we conclude that, as $\zeta\to +\infty,$
\begin{equation}
  \widetilde{x}_{c}^{n+2}=\left(\frac{\alpha \beta}{\zeta}\right)\left(1 +\textstyle{\frac{A}{\alpha\beta}}\widetilde{x}_{c}^{n}\right)
   \left(1 - \textstyle{\frac{\log\zeta}{\zeta}}F_n(\zeta) + \mbox{\rm h.o.t.}\right),\label{xn+2corr}
\end{equation}
where ``h.o.t.'' denotes terms with order higher than the orders of those explicitly written down.
Taking into account (\ref{1storder}) to estimate $\widetilde{x}_{c}^{n}$ in the right-hand side of (\ref{xn+2corr}), after 
some manipulations and using the binomial expansion we obtain, as \(\zeta\to+\infty\),
\begin{equation}
 \widetilde{x}_{c} = 
 \left(\frac{\alpha\beta}{\zeta}\right)^{\beta}\left(1+\frac{A}{\alpha}\left(\frac{\alpha\beta}{\zeta}\right)^{n\beta} -
 \beta\textstyle{\frac{\log\zeta}{\zeta}}F_n(\zeta) + \mbox{\rm h.o.t.}\right).\label{eq:xz}
\end{equation}

From standard center manifold theory \cite[chap. 2]{c81}, the long-time behaviour of 
$(\widetilde{x}(\zeta), \widetilde{v}(\zeta))$ is determined
by the behaviour on the center manifold modulo exponentially decaying terms
$\mathcal{O}\left(e^{-\lambda \zeta}\right),$ where $\lambda\in (0, \alpha)$,
in particular we can write
$$\widetilde{x}(\zeta) = \widetilde{x}_c(\zeta) + \mathcal{O}\left(e^{-\lambda \zeta}\right),$$
and using (\ref{eq:xz}) and remembering the definitions of $\beta$ and $A$, we conclude that
\begin{equation}
\widetilde{x}(\zeta) = 
 \left(\frac{\alpha}{(n+2)\zeta}\right)^{\frac{1}{n+2}} +
 \frac{n(n-1)}{2\alpha}\left(\frac{\alpha}{(n+2)\zeta}\right)^{\frac{n+1}{n+2}} + 
 \mbox{\rm h.o.t.}.
 \label{xlim}
\end{equation}

In order to obtain the corresponding estimates in the original time variable $t$
we first need to relate the asymptotics of both time scales.

\begin{lemma}\label{le:zetat}
With the same notations and definitions as before, we have
 \begin{equation}
  t  = \textstyle{\frac{(\alpha\beta)^{\beta}}{1-\beta}}\zeta^{1-\beta}+
 \textstyle{\frac{A}{(\alpha\beta)^{\beta}}}\zeta^{\beta} + o\left(\zeta^{\beta}\right), 
 \;\;\mbox{  as } t\to +\infty,\label{eq:tofzeta} 
 \end{equation}
and
 \begin{equation}
 \zeta =\left(\textstyle{\frac{1-\beta}{(\alpha\beta)^{\beta}}}\right)^{\frac{1}{1-\beta}}t^{\frac{1}{1-\beta}}
 -A\left(\textstyle{\frac{1-\beta}{\alpha\beta}}\right)^{\frac{2\beta}{1-\beta}}t^{\frac{2\beta}{1-\beta}} + 
 o\left(t^{\frac{2\beta}{1-\beta}}\right), \;\;\mbox{  as } \zeta\to +\infty.\label{eq:zetat}
 \end{equation}
\end{lemma}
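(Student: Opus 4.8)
The plan is to exploit the defining relation between the two time scales. Differentiating (\ref{tzeta}) gives $d\zeta/dt = 1/x(t)$, hence $dt/d\zeta = x(t(\zeta)) = \widetilde{x}(\zeta)$, so that
\[
t(\zeta) = t(\zeta_0) + \int_{\zeta_0}^{\zeta}\widetilde{x}(s)\,ds .
\]
Since $x$ is positive and $\zeta(t)\to+\infty$ as $\trmi$ (Proposition~\ref{prop1costin}), the maps $t\mapsto\zeta$ and $\zeta\mapsto t$ are increasing bijections and $\zeta\to+\infty$ is equivalent to $t\to+\infty$; thus it suffices to let $\zeta$ be large. To obtain (\ref{eq:tofzeta}) I would insert the asymptotic expansion (\ref{xlim}) of $\widetilde{x}(\zeta)$ into this integral and integrate term by term.

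Carrying this out, write (\ref{xlim}) as $\widetilde{x}(\zeta) = (\alpha\beta)^{\beta}\zeta^{-\beta} + \frac{A}{\alpha}(\alpha\beta)^{(n+1)\beta}\zeta^{-(n+1)\beta} + (\mbox{remainder})$. The first term integrates to $\frac{(\alpha\beta)^{\beta}}{1-\beta}\zeta^{1-\beta}$, the leading term of (\ref{eq:tofzeta}). The key piece of arithmetic is that $(n+1)\beta = 1-\beta$, so the second term integrates with exponent $1-(n+1)\beta = \beta$, and a short computation shows its coefficient is exactly $A/(\alpha\beta)^{\beta}$, matching the second term of (\ref{eq:tofzeta}). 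It then remains to show that every remaining contribution integrates to $o(\zeta^{\beta})$: the exponentially small discrepancy $\mathcal{O}(e^{-\lambda\zeta})$ between $\widetilde{x}$ and $\widetilde{x}_c$ integrates to $\mathcal{O}(1)$; the logarithmic correction $\propto \zeta^{-1-\beta}\log\zeta\,F_n(\zeta)$ coming from (\ref{eq:xz}), with $F_n=\mathcal{O}(1)$, has a convergent tail at $+\infty$ (since $1+\beta>1$) and contributes only $\mathcal{O}(1)$; the higher-order center-manifold terms are $\mathcal{O}(\zeta^{-\gamma})$ with $\gamma>(n+1)\beta$ and integrate to order $\zeta^{1-\gamma}=o(\zeta^{\beta})$; and the constants $t(\zeta_0)$ and the lower endpoint are $\mathcal{O}(1)=o(\zeta^{\beta})$.

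To obtain (\ref{eq:zetat}) I would invert (\ref{eq:tofzeta}) asymptotically. Setting $a=(\alpha\beta)^{\beta}/(1-\beta)$ and $b=A/(\alpha\beta)^{\beta}$, the leading balance $t\sim a\zeta^{1-\beta}$ gives $\zeta\sim(t/a)^{1/(1-\beta)}$, which is the first term of (\ref{eq:zetat}). For the correction I put $\zeta = \zeta_L(1+\epsilon)$ with $\zeta_L := (t/a)^{1/(1-\beta)}$ and $\epsilon\to 0$; substituting into (\ref{eq:tofzeta}), using $a\zeta_L^{1-\beta}=t$ and the binomial expansions $(1+\epsilon)^{1-\beta}=1+(1-\beta)\epsilon+\cdots$ and $(1+\epsilon)^{\beta}=1+\cdots$, the next balance forces $\epsilon = -\frac{b}{1-\beta}\,\zeta_L^{\beta}/t + \cdots$. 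The correction $\zeta_L\epsilon$ then carries the $t$-exponent $\frac{1+\beta}{1-\beta}-1 = \frac{2\beta}{1-\beta}$, and collecting constants reproduces the coefficient $-A\bigl(\frac{1-\beta}{\alpha\beta}\bigr)^{2\beta/(1-\beta)}$ in (\ref{eq:zetat}).

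The only genuine subtlety is not the algebra but the rigorous justification of the term-by-term integration: I must ensure that the remainder hidden in the ``h.o.t.'' of (\ref{xlim}), and the $\log\zeta/\zeta$ term, do not secretly contribute at order $\zeta^{\beta}$ and thereby corrupt the coefficient $A/(\alpha\beta)^{\beta}$. This is settled by the standard fact that an asymptotic relation $f(\zeta)\sim c\,\zeta^{-\gamma}$ may be integrated to $\int^{\zeta}f\sim\frac{c}{1-\gamma}\zeta^{1-\gamma}$ for $\gamma\neq 1$, with $o$-terms passing through the integral, combined with the observation above that each error-like term here has either a convergent tail or, after integration, an exponent strictly below $\beta$. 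Once (\ref{eq:tofzeta}) is established, the asymptotic inversion yielding (\ref{eq:zetat}) is routine bookkeeping.
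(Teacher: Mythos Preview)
Your proposal is correct and follows essentially the same route as the paper: both integrate $dt/d\zeta=\widetilde{x}(\zeta)$ using the two-term expansion (\ref{xlim}) and then invert asymptotically. The only differences are cosmetic---the paper makes the integration of the $o$-remainder explicit via an $\varepsilon$--$T$ argument (inequality (\ref{bound})) rather than invoking the standard rule for integrating asymptotic expansions, and for the inversion it bootstraps by substituting the leading-order relation $\zeta^{1-\beta}\sim(1-\beta)(\alpha\beta)^{-\beta}t$ into the subleading term of (\ref{eq:tofzeta}) instead of using your ansatz $\zeta=\zeta_L(1+\epsilon)$.
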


\begin{proof} From (\ref{xlim}) it follows that, for all $\ve>0$, there exists  $T=T(\ve)$ 
such that, for all $\zeta > T$,
\begin{equation}
\left(\widetilde{x}(\zeta)-\left(\frac{\alpha\beta}{\zeta}\right)^{\beta}\right)
\frac{\alpha}{A}\left(\frac{\alpha\beta}{\zeta}\right)^{-(n+1)\beta} \in [1-\ve, 1+\ve].
\label{bound}
\end{equation}

By the definition of $\zeta$ in (\ref{tzeta}) we can write
\[\zeta(t) -\zeta(t_0) =  \int_{t_0}^t\frac{1}{x(s)} ds,\]
and so $d\zeta/dt= 1/x(t).$ Hence
 $dt/d\zeta= \widetilde{x}(\zeta),$ and upon integration
\begin{equation}
 t(\zeta)-t(\zeta_0) = \int_{\zeta_0}^{\zeta}\widetilde{x}(s)ds, \label{tofzeta}
\end{equation}
where $\zeta_0=\zeta(t_0).$ Using the upper bound in (\ref{bound}) we have that, for $\zeta_0\geq T,$
\begin{eqnarray*}
t(\zeta)-t(\zeta_0) &\leq & (1+\ve)(\alpha\beta)^{(n+1)\beta}\frac{A}{\alpha}
\int_{\zeta_0}^{\zeta}
s^{-(n+1)\beta}ds  +(\alpha\beta)^{\beta}\int_{\zeta_0}^{\zeta} s^{-\beta}ds \\
& = & (1+\ve)(\alpha\beta)^{-\beta}A(\zeta^{\beta}-\zeta_0^{\beta})+\textstyle{\frac{(\alpha\beta)^{\beta}}{1-\beta}}
(\zeta^{1-\beta}-\zeta_0^{1-\beta}),
\end{eqnarray*}
from which we conclude that
\[
 t(\zeta) \leq \textstyle{\frac{(\alpha\beta)^{\beta}}{1-\beta}}\zeta^{1-\beta}+
 (1+\varepsilon)\textstyle{\frac{A}{(\alpha\beta)^{\beta}}}\zeta^{\beta}+
 \mathcal{O}(1) \;\mbox{\rm  as }\, \zeta\to\infty.
\]
By considering the lower bound in (\ref{bound}) we would get the reversed inequality, with \(1+\varepsilon\) replaced by 
\(1-\varepsilon\), and since \(\varepsilon\) is arbitrary, combining these two inequalities we conclude (\ref{eq:tofzeta}).

In order to obtain \(\zeta\) as a function of \(t\) we first consider the first term in the expansion (\ref{eq:tofzeta}),
written in the form
\(t(\zeta) =(\alpha\beta)^{\beta}(1-\beta)^{-1}\zeta^{1-\beta}(1+o(1))\), 
which implies \(\zeta^{1-\beta}=(\alpha\beta)^{-\beta}(1-\beta)\,t\,(1+o(1)),\) 
and we substitute this into the lower order term \(\zeta^\beta\) in (\ref{eq:tofzeta}), obtaining, as \(\zeta\to\infty,\)
\begin{eqnarray*}
 t(\zeta) &=&\frac{(\alpha\beta)^{\beta}}{1-\beta}\zeta^{1-\beta}+ 
A(\alpha\beta)^{-\beta}\zeta^{\beta}+ o(\zeta^\beta)\\
&=& \frac{(\alpha\beta)^{\beta}}{1-\beta}\zeta^{1-\beta}+ 
A(\alpha\beta)^{-\beta}\left(\zeta^{1-\beta}\right)^{\frac{\beta}{1-\beta}}+ o(\zeta^\beta)\\
&=& \frac{(\alpha\beta)^{\beta}}{1-\beta}\zeta^{1-\beta}+ 
A(\alpha\beta)^{-\beta}\left((\alpha\beta)^{-\beta}(1-\beta)t(1+o(1))\right)^{\frac{\beta}{1-\beta}}
+ o\left(t^{\frac{\beta}{1-\beta}}\right)\\
&=& \frac{(\alpha\beta)^{\beta}}{1-\beta}\zeta^{1-\beta}+ 
A\left(\frac{1-\beta}{\alpha\beta}\right)^{\frac{\beta}{1-\beta}}t^{\frac{\beta}{1-\beta}}(1+o(1))
+ o\left(t^{\frac{\beta}{1-\beta}}\right).
\end{eqnarray*}
Write the last expression as
\[
\zeta^{1-\beta} = (1-\beta)(\alpha\beta)^{-\beta}t\left(1-A  
\left(\frac{n+1}{\alpha}\right)^{\frac{1}{n+1}}t^{-\frac{n}{n+1}}+o\left(t^{-\frac{n}{n+1}}\right) \right).
\]
Raising both sides to the power $\frac{1}{1-\beta}$ 
and using Newton's binomial series in the right-hand side we obtain (\ref{eq:zetat}).\end{proof}

To finish the proof of Theorem~\ref{teo:ctlimit} we
use the expression (\ref{eq:zetat}) for \(\zeta(t)\) in (\ref{xlim}). After a few rearrangements similar to those
above we obtain (\ref{ctlimit}).
\end{proof}
%
%
%
%

\section{Monomer long time behaviour in the modified time scale}\label{sec:cts}

As pointed out in Section~\ref{sec:prelim}, the proof of Theorem~\ref{teo:main} relies in the exploration of (\ref{cjtausolution}),
for which one needs detailed information on the long-time behaviour of $\widetilde{c}_1(\T).$ This will be obtained
in this section, whose main result is the following:

\begin{theorem}\label{teo:longtime} \mbox{}
 With $\T$ and $\widetilde{c}_1(\T)$ as before, the following hold true: 
 \begin{equation}
  \left(\frac{n\tau}{\alpha}\right)^{(n-1)/n}\left(\widetilde{c}_1(\T)\right)^{n-1} = 1 + 
  (n-1)\left(1-\textstyle{\frac{1}{n}}\right)\frac{\log\T}{\T} + o\Bigl(\textstyle{\frac{\log\T}{\T}}\Bigr), 
  \ \mbox{\rm as $\T\to\infty$}. \label{longtime}
 \end{equation}
\end{theorem}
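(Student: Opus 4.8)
The starting point is the identity $\widetilde{c}_1(\tau)=x(t(\tau))$, so the plan is to feed the large-$t$ expansion of $x$ from Theorem~\ref{teo:ctlimit} through the time change (\ref{eq:tau}) to produce a large-$\tau$ expansion of $\widetilde{c}_1$. Writing Theorem~\ref{teo:ctlimit} as $x(t)=a\,t^{-1/(n+1)}+b\,t^{-1}+o(t^{-1})$ with $a=(\alpha/(n+1))^{1/(n+1)}$ and $b=n(n-1)/(n+1)$, I would integrate term by term in $\tau(t)=\int_0^t x(s)\,ds$. Splitting the integral at a fixed $t_0$ past which the expansion holds, the leading power integrates to $\frac{a(n+1)}{n}t^{n/(n+1)}$, the $b\,t^{-1}$ term integrates to the decisive logarithmic contribution $b\log t$, and the elementary estimate $\int^{t}o(s^{-1})\,ds=o(\log t)$ controls the remainder; every constant coming from the lower limit is $O(1)=o(\log t)$. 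This gives
\[
\tau(t)=\frac{a(n+1)}{n}\,t^{n/(n+1)}+b\log t+o(\log t),\qquad t\to+\infty .
\]

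The central step is to invert this relation to the required order. Put $p:=n/(n+1)$ and $A_1:=\frac{a(n+1)}{n}$. The leading inversion $t\sim(\tau/A_1)^{1/p}$ gives $\log t=\frac{1}{p}\log\tau+O(1)$; inserting this back into the logarithmic term and solving for $t^p$ yields $t^p=\frac{\tau}{A_1}\left(1-\frac{b}{p}\frac{\log\tau}{\tau}+o(\log\tau/\tau)\right)$, and raising to the power $1/p$ produces
\[
t=\Big(\frac{\tau}{A_1}\Big)^{1/p}\Big(1-\frac{b}{p^2}\frac{\log\tau}{\tau}+o(\log\tau/\tau)\Big),\qquad \tau\to+\infty .
\]

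Substituting $t(\tau)$ into $x(t)$ and using $(1-p)/p=1/n$, the leading power gives $\widetilde{c}_1(\tau)=a\,(\tau/A_1)^{-1/n}\big(1+\frac{(1-p)b}{p^2}\frac{\log\tau}{\tau}+\cdots\big)$, whereas the $b\,t^{-1}$ term of $x$ only contributes at relative order $\tau^{-1}=o(\log\tau/\tau)$ and is negligible; thus the whole $\log\tau/\tau$ correction comes from inverting the $b\log t$ term. Raising to the power $n-1$, multiplying by $(n\tau/\alpha)^{(n-1)/n}$, and expanding, a short bookkeeping check shows that all powers of $\alpha$, $n$ and $n+1$ in the leading coefficient cancel to give $1$, while the coefficient of $\log\tau/\tau$ equals $(n-1)\frac{(1-p)b}{p^2}$. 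Substituting $1-p=1/(n+1)$, $b=n(n-1)/(n+1)$ and $p^2=n^2/(n+1)^2$ collapses this to $(n-1)(1-\frac{1}{n})$, which is precisely (\ref{longtime}).

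I expect the main obstacle to be the asymptotic inversion together with the careful bookkeeping of competing corrections: one must verify that the logarithmic term survives inversion at exactly order $\log\tau/\tau$ and dominates the genuinely algebraic $t^{-1}$ correction, and that the successive $o(\cdot)$ remainders compose consistently through integration, inversion, exponentiation and the final product. A parallel route would avoid the $t$-scale by using $d\tau/d\zeta=\widetilde{x}(\zeta)^2$ with (\ref{xlim}), integrating to get $\tau(\zeta)$ --- whose subleading term again yields a $\log\zeta$ --- and then inverting; this is essentially equivalent and demands the same logarithmic bookkeeping.
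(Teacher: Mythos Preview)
Your proof is correct and follows essentially the same route as the paper: integrate the expansion (\ref{ctlimit}) to obtain $\tau(t)$ (the paper's Lemma~\ref{lem:taut}), invert asymptotically to get $t(\tau)$ (the paper's Lemma~\ref{lem:ttau}), and substitute back into $x(t)$, with the same observation that the $b\,t^{-1}$ term of $x$ contributes only at order $\tau^{-1}=o(\log\tau/\tau)$ so the entire $\log\tau/\tau$ correction comes from inverting the logarithmic term. The paper merely packages the integration and inversion steps as two separate lemmas with explicit $\varepsilon$-bounds, whereas you compress them into a single argument; the bookkeeping and the final coefficient $(n-1)(1-\tfrac{1}{n})$ match.
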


To prove this theorem we need to express $t$ in terms of $\T$ in (\ref{ctlimit}), and  we shall do this in the next few lemmas. 
The approach is analogous to the one used in the previous section in order to relate $t$ and $\zeta.$

\begin{lemma}\label{lem:taut} \mbox{}
For $\T(t)$ defined by (\ref{eq:tau}), and being $c_1(t)$ the first component 
of any solution $(c_1,c_n,c_{n+1},\ldots)$ of (\ref{system}), the following holds
\begin{equation}
 \T(t) = \textstyle{\frac{n+1}{n}}\left(\frac{\alpha}{n+1}\right)^{\frac{1}{n+1}}t^{\frac{n}{n+1}} + \frac{n(n-1)}{n+1}\log t + 
 o(\log t), \quad\mbox{ as $t\to\infty$.} \label{eq:tauoft}
\end{equation}
\end{lemma}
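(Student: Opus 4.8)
The plan is to obtain (\ref{eq:tauoft}) by directly integrating the asymptotic expansion (\ref{ctlimit}) for $c_1(t)=x(t)$ furnished by Theorem~\ref{teo:ctlimit}, since by the definition (\ref{eq:tau}) one has $\T(t)=\int_0^t c_1(s)\,ds$. The only care needed is in handling the contribution of the initial segment of the integral, where the asymptotics do not yet apply, and in controlling the integral of the $o(s^{-1})$ remainder.

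First I would fix a large $T_0>0$ and split $\T(t)=\int_0^{T_0}c_1(s)\,ds+\int_{T_0}^t c_1(s)\,ds$. By Proposition~\ref{prop1costin} the function $c_1$ is positive and continuous, so the first integral is a finite constant, contributing only $\mathcal{O}(1)$. On $[T_0,t]$, with $T_0$ taken large enough that (\ref{ctlimit}) is in force, I substitute that expansion and integrate term by term. The leading power $s^{-1/(n+1)}$ integrates to $\frac{n+1}{n}s^{n/(n+1)}$, which, evaluated between $T_0$ and $t$, produces exactly $\frac{n+1}{n}\bigl(\frac{\alpha}{n+1}\bigr)^{1/(n+1)}t^{n/(n+1)}$ together with an $\mathcal{O}(1)$ constant from the lower limit; the $s^{-1}$ term integrates to $\log s$, yielding $\frac{n(n-1)}{n+1}\log t$ plus another $\mathcal{O}(1)$ constant.

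The remaining point, and the only genuinely delicate one, is to show that $\int_{T_0}^t o(s^{-1})\,ds=o(\log t)$. This follows from the $\ve$--$\delta$ meaning of the remainder: given any $\ve>0$, by enlarging $T_0$ if necessary so that the remainder is bounded in modulus by $\ve s^{-1}$ for all $s\geq T_0$, one gets $\bigl|\int_{T_0}^t o(s^{-1})\,ds\bigr|\leq \ve\int_{T_0}^t s^{-1}\,ds=\ve\log t+\mathcal{O}(1)$; since $\ve$ is arbitrary, this integral is indeed $o(\log t)$.

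Finally I would collect the three contributions. All the $\mathcal{O}(1)$ constants arising from the lower limits of integration and from $\int_0^{T_0}c_1$ are themselves $o(\log t)$, so they may be absorbed into the error term, and (\ref{eq:tauoft}) follows. The argument is essentially routine termwise integration; the sole obstacle worth flagging is the justification that the integrated remainder stays $o(\log t)$ rather than growing like a power of $t$, which is precisely what the $\ve$-argument above secures.
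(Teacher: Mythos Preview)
Your proof is correct and follows essentially the same approach as the paper: both split $\T(t)$ at a large time $T$, integrate the two explicit terms of (\ref{ctlimit}) to obtain the power and logarithmic contributions, and handle the $o(t^{-1})$ remainder by an $\varepsilon$-argument showing its integral is $o(\log t)$. The paper packages the $\varepsilon$-bound slightly differently---writing $\frac{n+1}{n(n-1)}\,t\,(c_1(t)-(\alpha/(n+1))^{1/(n+1)}t^{-1/(n+1)})\in[1-\varepsilon,1+\varepsilon]$ before integrating---but this is the same idea in a marginally different order.
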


\begin{proof}
Let us  consider the time scale (\ref{eq:tau}) and write it as
\begin{equation}
 \T(t) = \int_0^tc_1(s)ds = \T(T) + \int_T^tc_1(s)ds.  \label{eq:tauT}
\end{equation}
Remembering that $x(t):=c_1(t)$, we deduce from (\ref{ctlimit}) that, for all $\ve>0,$ there exists $T=T(\ve)$ such that,
for all $t>T$,
\begin{equation}
 \frac{n+1}{n(n-1)}\,t\,\left(c_1(t)-\left(\frac{\alpha}{n+1}\right)^{\frac{1}{n+1}}t^{-\frac{1}{n+1}}\right)
 \in [1-\ve, 1+\ve]. \label{eq:epsilonbounds}
\end{equation}
Let us consider the upper bound case in (\ref{eq:epsilonbounds}) (the lower bound case is analogous). Substituting it in (\ref{eq:tauT})
one gets
\begin{eqnarray*}
 \T(t)-\T(T) & = & \int_T^tc_1(s)ds \\
 & \leq & \left(\frac{\alpha}{n+1}\right)^{\frac{1}{n+1}}\int_T^ts^{-\frac{1}{n+1}}ds + (1+\ve)\frac{n(n-1)}{n+1}\int_T^t\frac{1}{s}ds\\
 & = & \textstyle{\frac{n+1}{n}}\left(\frac{\alpha}{n+1}\right)^{\frac{1}{n+1}}\left(t^{\frac{n}{n+1}}-T^{\frac{n}{n+1}}\right) + 
 (1+\ve)\frac{n(n-1)}{n+1}\log\frac{t}{T},
\end{eqnarray*}
from which follows
\[
 \T(t) \leq\textstyle{\frac{n+1}{n}}\left(\frac{\alpha}{n+1}\right)^{\frac{1}{n+1}}t^{\frac{n}{n+1}} + (1+\ve)\frac{n(n-1)}{n+1}\log t + 
 \mathcal{O}(1), \quad\mbox{\rm as $t\to\infty$.}
\]
By considering the lower bound in (\ref{eq:epsilonbounds}) we obtain the reverse inequality, with $1+\ve$ substituted by $1-\ve$, 
and the result (\ref{eq:tauoft}) follows by the arbitrariness of $\ve$.
 \end{proof}

\begin{lemma}\label{lem:ttau} 
With the assumptions of Lemma~\ref{lem:taut} we have, as $\T\to\infty$,
\begin{eqnarray}
 t(\T) & = & (\textstyle{\frac{n}{n+1}})^{\frac{n+1}{n}}\left(\textstyle{\frac{n+1}{\alpha}}\right)^{\frac{1}{n}}\T^{\frac{n+1}{n}} -
 (n-1)(\textstyle{\frac{n}{n+1}})^{\frac{n+1}{n}}\left(\textstyle{\frac{n+1}{\alpha}}\right)^{\frac{1}{n}}\T^{\frac{1}{n}}\log \T + 
  o(\T^{\frac{1}{n}}\log\T). \nn \\
  & & \mbox{} \label{eq:toftau}
\end{eqnarray}
\end{lemma}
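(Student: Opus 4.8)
The plan is to invert the asymptotic relation (\ref{eq:tauoft}) of Lemma~\ref{lem:taut}, in exact parallel with the passage from (\ref{eq:tofzeta}) to (\ref{eq:zetat}) carried out in Lemma~\ref{le:zetat}. Write (\ref{eq:tauoft}) in the compact form $\T = P\,t^{n/(n+1)} + Q\log t + o(\log t)$ as $t\to\infty$, with $P:=\frac{n+1}{n}\bigl(\frac{\alpha}{n+1}\bigr)^{1/(n+1)}$ and $Q:=\frac{n(n-1)}{n+1}$. First I would extract the leading order: discarding the logarithmic terms gives $\T\sim P\,t^{n/(n+1)}$, whence $t\sim P^{-(n+1)/n}\T^{(n+1)/n}$. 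This already produces the leading coefficient of (\ref{eq:toftau}), since $P^{-(n+1)/n}=\bigl(\frac{n}{n+1}\bigr)^{(n+1)/n}\bigl(\frac{n+1}{\alpha}\bigr)^{1/n}$, and, on taking logarithms, the auxiliary estimate $\log t=\frac{n+1}{n}\log\T+\mathcal{O}(1)$ that is needed to treat the subdominant term.

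The second step feeds this back into the logarithmic term. Substituting $\log t=\frac{n+1}{n}\log\T+\mathcal{O}(1)$ into the rearranged identity $P\,t^{n/(n+1)}=\T-Q\log t-o(\log t)$, and using $\frac{n+1}{n}Q=n-1$, gives $t^{n/(n+1)}=\frac{\T}{P}\bigl(1-(n-1)\frac{\log\T}{\T}+o(\frac{\log\T}{\T})\bigr)$. Raising both sides to the power $\frac{n+1}{n}$ and expanding the bracket with Newton's binomial series $(1+x)^{(n+1)/n}=1+\frac{n+1}{n}x+\cdots$ --- exactly as in the closing lines of the proof of Lemma~\ref{le:zetat} --- scales this correction by the exponent $\frac{n+1}{n}$. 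Since $\T^{(n+1)/n}\cdot\T^{-1}=\T^{1/n}$, the correction becomes a multiple of $\T^{1/n}\log\T$, and collecting the two leading contributions yields (\ref{eq:toftau}) with remainder $o(\T^{1/n}\log\T)$.

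To make the argument rigorous rather than formal, I would, as in the proofs of Lemmas~\ref{lem:taut} and \ref{le:zetat}, replace the asymptotic equality (\ref{eq:tauoft}) by the pair of $\ve$-dependent two-sided bounds valid for all large $t$, carry both inequalities through the inversion and the exponentiation, and let $\ve\to0$ at the end. The main obstacle I anticipate is the order bookkeeping around the logarithmic term: one must check that the $\mathcal{O}(1)$ error incurred when replacing $\log t$ by $\frac{n+1}{n}\log\T$, together with the $o(\log t)$ remainder already present in (\ref{eq:tauoft}), contribute only at order $o(\T^{1/n}\log\T)$ after the final exponentiation, so that they are genuinely absorbed into the stated remainder. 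The most delicate point, and the easiest place to slip, is the final scaling by the exponent $\frac{n+1}{n}$ when passing from the bracket for $t^{n/(n+1)}$ to the one for $t$, which must be applied to the coefficient of the $\frac{\log\T}{\T}$ correction and not only to the leading term.
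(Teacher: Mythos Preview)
Your proposal is correct and follows essentially the same route as the paper's proof: introduce abbreviations for the two coefficients in (\ref{eq:tauoft}), extract the leading order $t\sim P^{-(n+1)/n}\T^{(n+1)/n}$, feed this back into the logarithmic term to replace $\log t$ by $\tfrac{n+1}{n}\log\T+\mathcal{O}(1)$, solve for $t^{n/(n+1)}$, and then raise to the power $\tfrac{n+1}{n}$ via the binomial expansion. The only cosmetic difference is that the paper first passes to $t^{-n/(n+1)}$ and uses the geometric series before exponentiating by $-\tfrac{n+1}{n}$, whereas you work directly with $t^{n/(n+1)}$; the two manipulations are equivalent.
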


\begin{proof} Let $B:=\frac{n+1}{n}\left(\frac{\alpha}{n+1}\right)^{\frac{1}{n+1}}$ and $D:=\frac{n(n-1)}{n+1},$ and let us write
(\ref{eq:tauoft}) as
\begin{equation}
 \T(t)=Bt^{\frac{n}{n+1}} + D(\log t)(1+o(1)), \quad\mbox{\rm as $t, \T\to\infty$.} \label{tauoftAB}
\end{equation}

Thus $\T(t) = Bt^{\frac{n}{n+1}} (1+ o(1)),$ and so $t= B^{-\frac{n+1}{n}}\T^{\frac{n+1}{n}}(1+o(1)).$
Substituting this expression in the logarithm term of (\ref{tauoftAB}) we get, as $t,\T\to\infty,$
\begin{eqnarray*}
 \T(t) & = & Bt^{\frac{n}{n+1}} + D\log \left(B^{-\frac{n+1}{n}}\T^{\frac{n+1}{n}}(1+o(1))\right) (1+ o(1))\\
 & = & Bt^{\frac{n}{n+1}} + D\log \left(B^{-\frac{n+1}{n}}\T^{\frac{n+1}{n}}\right) + o(\log\T),
\end{eqnarray*}
and hence
\begin{eqnarray}
 t^{-\frac{n}{n+1}} & = & B\left(\T - \textstyle{\frac{n+1}{n}}D\log\T  + o(\log\T)\right)^{-1}\nonumber \\
 & = & B\T^{-1}\left( 1 - \left(\textstyle{\frac{n+1}{n}}D\T^{-1}\log\T + o(\textstyle{\frac{\log\T}{\T}})\right)\right)^{-1}\nonumber \\
 & = & B\T^{-1}\left(1+\textstyle{\frac{n+1}{n}}D\T^{-1}\log\T + o(\textstyle{\frac{\log\T}{\T}})\right).  \label{tpower<0}
\end{eqnarray}
Raising (\ref{tpower<0}) to the power $-\textstyle{\frac{n+1}{n}}$ and using Newton's binomial series in the right-hand side we
arrive at

\begin{equation}
 t = B^{-\frac{n+1}{n}}\T^{\frac{n+1}{n}} - \left(\textstyle{\frac{n+1}{n}}\right)^2DB^{-\frac{n+1}{n}}\T^{\frac{1}{n}}\log\T + 
 o(\T^{\frac{1}{n}}\log\T),\label{tversustau}
\end{equation}
and substituting $B$ and $D$ back into (\ref{tversustau}) results in (\ref{eq:toftau}).
\end{proof}

We are now ready to prove Theorem~\ref{teo:longtime}.

\begin{proof}(of Theorem~\ref{teo:longtime}).
From (\ref{eq:ctilde}) and (\ref{ctlimit}) we have, as $t, \T\to +\infty$,

\begin{eqnarray*}
\left(\textstyle{\frac{n\tau}{\alpha}}\right)^{\frac{n-1}{n}}\!\!\!\left(\widetilde{c}_1(\T)\right)^{n-1} \!\!
& = &\!\! \left(\textstyle{\frac{n\tau}{\alpha}}\right)^{\frac{n-1}{n}}\!\!\!\left(
\left(\textstyle{\frac{\alpha}{n+1}}\right)^{\frac{1}{n+1}}t^{-\frac{1}{n+1}} + \textstyle{\frac{n(n-1)}{n+1}}t^{-1}
 + o(t^{-1})\right)^{n-1}\!\!\!\\
 & = & \left(\textstyle{\frac{n\tau}{\alpha}}\right)^{\frac{n-1}{n}}\left(\textstyle{\frac{\alpha}{n+1}}\right)^{\frac{n-1}{n+1}}t^{-\frac{n-1}{n+1}}\times\\
 &   & \qquad\quad\times \left(1 + \textstyle{\frac{n(n-1)}{n+1}}\left(\textstyle{\frac{n+1}{\alpha}}\right)^{\frac{1}{n+1}}t^{-\frac{n}{n+1}}
 + o\left(t^{-\frac{n}{n+1}}\right)\right)^{n-1}\\
 & = & \left(\textstyle{\frac{n}{\alpha}}\right)^{\frac{n-1}{n}}\left(\textstyle{\frac{\alpha}{n+1}}\right)^{\frac{n-1}{n+1}}
 \T^{\frac{n-1}{n}}t^{-\frac{n-1}{n+1}}\times\\
 &   & \qquad\quad\times \left(1 + \textstyle{\frac{n(n-1)^2}{n+1}}\left(\textstyle{\frac{n+1}{\alpha}}\right)^{\frac{1}{n+1}}t^{-\frac{n}{n+1}}
 + o\left(t^{-\frac{n}{n+1}}\right)\right),
\end{eqnarray*} 
where we used Newton's binomial expansion in the last equality.
We can now apply Lemma~\ref{lem:ttau} to write the right-hand side in terms of $\T$, and after a few computations analogous to
those described above, (\ref{longtime}) arises.
\end{proof}


\section{Rate of convergence to the similarity profile}\label{sec:crsp}

In this section we prove the  paper's main result, Theorem~\ref{teo:main}.

In \cite{costin} it was proved that the limit of (\ref{convratelhs}) 
is equal to zero, thus generalizing a similar result first proved in \cite{crw} in the special case $n=2.$ 
For the proof of Theorem~\ref{teo:main} we shall apply the approach used in \cite{crw,costin}, consisting in
exploring the representation formula (\ref{cjtausolution}). In those papers the information that was needed 
was just the first term in Stirling's expansion of the gamma function (to estimate the factorials)
and the first term in the long time behaviour of the solution $\widetilde{c}_1$ which, in \cite{crw}
was obtained by center manifold methods, and in \cite{costin} by asymptotic analysis methods. Here we
shall essentially make use of the results of Sections \ref{sec:cma} and \ref{sec:cts}, in particular
(\ref{longtime}) proved in Theorem~\ref{teo:longtime}, in order 
to get the needed higher order information.

Before starting the proof of Theorem~\ref{teo:main} we briefly point out some of its consequences.

\begin{corollary}
 From Theorem~\ref{teo:main} we immediately conclude the following:
 \begin{description}
  \item[i.] The convergence is \emph{not} uniform in $\eta\in (0,\infty)\setminus\{1\}$.
  \item[ii.] For $\eta\in\Xi,$ a compact set not containing $1$ and such that $\Xi\cap (0,1)\neq \emptyset$ and 
  $\Xi\cap (1,\infty)\neq \emptyset,$ if $1<\mu < 2-\frac{1}{n},$ the solution converges to the similarity profile 
  $\Phi_1(\eta)$ at a power rate $\tau^{\frac{n-1}{n}-\mu}$.
  \item[iii.] Under the same conditions, if $\mu \geq2-\frac{1}{n}$ the solution converges 
  to the similarity profile $\Phi_1(\eta)$ at a rate $\frac{\log\tau}{\tau}$.
 \end{description}
\end{corollary}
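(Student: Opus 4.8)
The corollary is a direct reading of Theorem~\ref{teo:main}, so the plan is to compare the orders in $\tau$ of the two pieces (\ref{convrate1}) and (\ref{convrate2}) on the two sides of $\eta=1$, after recording their pointwise rates. On $(0,1)$, since $\log((1-\eta)\tau)=\log\tau+\log(1-\eta)\sim\log\tau$ and $\frac{1}{(1-\eta)\tau}=\frac{1}{1-\eta}\tau^{-1}$, the right-hand side of (\ref{convrate1}) equals $C_1(\eta)\frac{\log\tau}{\tau}(1+o(1))$ with $C_1(\eta)=(n-1)(1-\frac1n)(1-\eta)^{-(2n-1)/n}>0$; on $(1,\infty)$ the right-hand side of (\ref{convrate2}) is of order $\tau^{\frac{n-1}{n}-\mu}$, with an $\eta$-dependent constant. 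Since a compact $\Xi$ avoiding $1$ stays bounded away from the singularity, these constants are bounded above and below on $\Xi$, so over $\Xi$ the error is governed by the slower (more slowly decaying) of the two rates $\frac{\log\tau}{\tau}$ and $\tau^{\frac{n-1}{n}-\mu}$. Everything then reduces to comparing these two expressions.

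For part (i) I would use the degeneration of the estimate as $\eta\to1$. The coefficient $C_1(\eta)=(n-1)(1-\frac1n)(1-\eta)^{-(2n-1)/n}$ diverges as $\eta\uparrow1$, mirroring the blow-up of the profile $\Phi_1(\eta)=(1-\eta)^{-(n-1)/n}$ itself; thus the rate of convergence on $(0,1)$ deteriorates without bound as $\eta\to1^-$. Consequently there can be no single $R(\tau)\to0$ that dominates the error uniformly for all $\eta\in(0,1)$, since for any prescribed multiple of a candidate rate one finds $\eta$ near $1$ whose coefficient exceeds it; this is incompatible with uniform convergence on any set whose closure contains $1$, in particular on $(0,\infty)\setminus\{1\}$.

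For parts (ii) and (iii) I would simply compare exponents. Writing the two rates as $\frac{\log\tau}{\tau}=\tau^{-1}\log\tau$ and $\tau^{\frac{n-1}{n}-\mu}$, their quotient is $\tau^{\frac{n-1}{n}-\mu+1}/\log\tau=\tau^{(2-\frac1n)-\mu}/\log\tau$. Hence if $1<\mu<2-\frac1n$ the exponent $(2-\frac1n)-\mu$ is positive, so $\tau^{\frac{n-1}{n}-\mu}$ decays strictly more slowly than $\frac{\log\tau}{\tau}$; because $\Xi$ meets $(1,\infty)$, this term dominates the error over $\Xi$, giving the power rate $\tau^{\frac{n-1}{n}-\mu}$ of (ii). If instead $\mu\geq2-\frac1n$ the same quotient tends to $0$ (the exponent is negative, or, at the boundary $\mu=2-\frac1n$, it vanishes and the $\log\tau$ decides), so the contribution from $\Xi\cap(0,1)$, of order $\frac{\log\tau}{\tau}$, dominates, giving (iii).

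The argument is essentially bookkeeping, so the only genuinely delicate point is part (i): Theorem~\ref{teo:main} furnishes asymptotics that are pointwise in a fixed $\eta$, and turning the $\eta\to1$ degeneration of the coefficient into a bona fide statement about the supremum over $\eta$ requires a little care --- essentially a uniform-in-$\eta$ control of the estimate as $\eta\to1$, rather than naively letting $\eta\to1$ inside the fixed-$\eta$ limit. For (ii)--(iii) the corresponding point is milder: one only needs that on the compact set $\Xi$, bounded away from $1$, the $\eta$-dependent constants in (\ref{convrate1})--(\ref{convrate2}) are uniformly bounded, so that the pointwise rates combine into a single rate over $\Xi$; this is immediate.
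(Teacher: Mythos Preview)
Your proposal is correct and matches the paper's own treatment: the paper gives no separate proof of the corollary, stating only that it follows ``immediately'' from Theorem~\ref{teo:main}, and your argument is precisely the natural reading-off of that theorem---comparing the orders $\tau^{-1}\log\tau$ and $\tau^{(n-1)/n-\mu}$ via the exponent $(2-\tfrac1n)-\mu$, and observing the blow-up of the $(0,1)$ coefficient as $\eta\to1^-$ for non-uniformity. Your caveat about part~(i), that turning the pointwise degeneration into a rigorous statement about the supremum strictly requires some uniformity in $\eta$ of the asymptotic in Theorem~\ref{teo:main}, is a fair observation; the paper does not address this either and simply treats the conclusion as self-evident from the form of the right-hand side.
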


\begin{remark}
Observe that Theorem~\ref{teo:main} implies that for initial conditions $(c_j(0))$  decaying fast enough with 
$j$, namely, for $\mu \geq2-\frac{1}{n}$,
the information about the initial data is lost in the similarity limit, in the sense that neither the bounds 
$\rho_1$ and $\rho_2$, nor the decay rate $\mu,$ and 
obviously no other details of the initial condition, are reflected in the limit or in the rate at which the limit is approached. 

However, if the initial data is slowly decaying, namely if $1<\mu < 2-\frac{1}{n},$
then, although information about the initial condition is also lost in the similarity limit, the rate at which 
this limit is approached still retains 
information about the initial condition.

Furthermore, if one can measure the decay at a single value of the similarity variable $\eta>1$ 
(and not only the overall rate in a set $\Xi$ of values of $\eta$) then the rate of decay of the 
initial condition for large clusters is always reflected in the rate of
convergence to the similarity profile $\Phi_1(\eta).$

We think this is an interesting behaviour that, as far as we know, seems to be the first time it is observed in 
coagulation type systems.
\end{remark}

\begin{proof} (of Theorem~\ref{teo:main}).
 As in \cite{crw,costin}, we start by considering the case of monomeric initial conditions,
 in which case the sum in the 
 right hand side of (\ref{cjtausolution}) is zero for all time $\tau$.
 
 Consider the function $\varphi_1: [n, \infty)\times[0,\infty)$ defined by
 \begin{equation}
  \varphi_1(x,\tau) := \frac{\left(\frac{n\tau}{\alpha}\right)^{(n-1)/n}}{\Gamma(x-n+1)}
 \int_0^{\tau}\left(\widetilde{c}_1(\T-s)\right)^{n-1}s^{x-n}e^{-s}ds.\label{fi1} 
 \end{equation}
Comparing this expression with the integral in the right hand side of (\ref{cjtausolution}) we see that, 
when $x=j\in\Nb\cap[n, \infty),$ 
$\varphi_1(j,\tau) = \left(\frac{n\tau}{\alpha}\right)^{(n-1)/n}\widetilde{c}_j(\T).$ 
So, we are going to study the rate of convergence
of $\varphi_1(\eta\T,\T)$ to $\Phi_1(\eta)$ as $\T\to\infty$, for fixed $\eta\in(0,\infty)\setminus\{1\}.$

Changing the integration variable in (\ref{fi1}) $s\mapsto y=s/\T$, and
using  (\ref{longtime}) to write, for $0<y<1,$
\[
 \left(\frac{n\tau(1-y)}{\alpha}\right)^{(n-1)/n}\!\!\!\left(\widetilde{c}_1(\T(1-y))\right)^{n-1} = 1 + f_n(\T(1-y)), 
\]
where $f_n(\T(1-y)) = (n-1)\left(1-\textstyle{\frac{1}{n}}\right)\,\frac{\log\T(1-y)}{\T(1-y)} + 
o\left(\textstyle{\frac{\log\T}{\T}}\right)$  as $\T\to\infty,$
the expression (\ref{fi1}) can be written as 
\begin{equation}
  \varphi_1(\eta\T,\tau) = \frac{\T^{1-n+\eta\T}}{\Gamma(\eta\T-n+1)}
 \int_0^{1}(1 + f_n(\T(1-y)))\frac{e^{\T(\eta\log y-y)}}{y^n(1-y)^{(n-1)/n}}dy. \label{fi2} 
 \end{equation}
Now, in order to proceed, we need the following estimates, obtained from Stirling's asymptotic formula, 
and valid as $\T\to\infty,$
\begin{eqnarray}
 \Gamma(\eta\T-n+1) &=& \underbrace{\frac{\Gamma(\eta\T)}{(\eta\T-1)(\eta\T-2)\cdots(\eta\T-(n-1))}}_{(n-1)\, \mbox{\rm terms}} 
 \nonumber \\
 & = & \frac{e^{-\eta\T}(\eta\T)^{\eta\T-\frac{1}{2}}\sqrt{2\pi}\left(1+ \frac{1}{12\eta\T} + {\mathcal O}(\T^{-2})\right)}
 {(\eta\T)^{n-1}\left(1-\frac{n(n-1)}{2}\frac{1}{\eta\T}+ {\mathcal O}(\T^{-2})\right)} \nonumber \\
 & = & \sqrt{2\pi}e^{-\eta\T}(\eta\T)^{\eta\T-n+\frac{1}{2}}
 \frac{1+ \frac{1}{12\eta\T} + {\mathcal O}(\T^{-2})}{1-\frac{n(n-1)}{2}\frac{1}{\eta\T}+ {\mathcal O}(\T^{-2})}. \label{estimate1}
\end{eqnarray}
Using the formula for the sum of a geometric series we can write
\begin{eqnarray}
\lefteqn{\left(1-\frac{n(n-1)}{2}\frac{1}{\eta\T}+ {\mathcal O}(\T^{-2})\right)\left(1+ \frac{1}{12\eta\T} + {\mathcal O}(\T^{-2})\right)^{-1} =}
\nonumber\\ & = &
\left(1-\frac{n(n-1)}{2}\frac{1}{\eta\T}+ {\mathcal O}(\T^{-2})\right)\left(1- \frac{1}{12\eta\T} - {\mathcal O}(\T^{-2})\right) \nonumber \\
& = & 1- \left(\frac{1}{12}+\frac{n(n-1)}{2}\right)\frac{1}{\eta\T} + {\mathcal O}(\T^{-2}). \label{estimate2}
\end{eqnarray}
Substituting (\ref{estimate1}) into (\ref{fi2}) and using (\ref{estimate2}), equation (\ref{fi2}) becomes
\begin{eqnarray}
 \varphi_1(\eta\T,\tau) & = &\frac{1}{\sqrt{2\pi}}\eta^{n-\frac{1}{2}-\eta\T}\T^{\frac{1}{2}}
 \left(1-\frac{1+6n(n-1)}{12}\frac{1}{\eta\T}+ {\mathcal O}(\T^{-2})\right)\times  \nonumber \\
 & & \times\int_0^{1}(1 + f_n(\T(1-y)))\frac{e^{\T(\eta\log y-y+\eta)}}{y^n(1-y)^{(n-1)/n}}dy \nonumber \\
 & = & J_1(\eta,\T) - J_2(\eta,\T) + J_3(\eta,\T) - J_4(\eta,\T), \label{fi3}
\end{eqnarray} 
where $J_k(\eta,\T),\,k =1, \ldots, 4,$ are defined by
\begin{eqnarray}
 & & J_1(\eta,\T)
  :=  \frac{\eta^{n-\eta\T}}{\sqrt{2\pi\eta}}\T^{\frac{1}{2}}
 \int_0^{1}\frac{e^{\T(\eta\log y-y+\eta)}}{y^n(1-y)^{(n-1)/n}}dy,  \label{Int1} \\
 & & J_2(\eta,\T)  :=   \frac{\eta^{n-\eta\T}}{\sqrt{2\pi\eta}}\T^{\frac{1}{2}}
 \left(\frac{1+6n(n-1)}{12\eta\T}+ {\mathcal O}(\T^{-2})\right)\times \label{Int2} \\
 &  & \qquad\qquad\qquad\times 
 \int_0^{1}\frac{e^{\T(\eta\log y-y+\eta)}}{y^n(1-y)^{(n-1)/n}}dy, \nonumber  \\
& & J_3(\eta,\T)  :=   \frac{\eta^{n-\eta\T}}{\sqrt{2\pi\eta}}\T^{\frac{1}{2}}
 \int_0^{1}f_n(\T(1-y))\frac{e^{\T(\eta\log y-y+\eta)}}{y^n(1-y)^{(n-1)/n}}dy , \label{Int3}\\
& & J_4(\eta,\T)  :=   \frac{\eta^{n-\eta\T}}{\sqrt{2\pi\eta}}\T^{\frac{1}{2}}
 \left(\frac{1+6n(n-1)}{12\eta\T}+{\mathcal O}(\T^{-2})\right)\times \label{Int4}\\
 &  & \qquad\qquad\qquad\times \int_0^{1}f_n(\T(1-y))\frac{e^{\T(\eta\log y-y+\eta)}}{y^n(1-y)^{(n-1)/n}}dy. \nonumber  
\end{eqnarray}
We now study the limit as $\T\to\infty$ of each of these functions separately.

\begin{lemma}\label{lem:Int1} 
For each fixed $\eta\in (0,\infty)\setminus\{1\},$ the following holds:
 $$J_1(\eta,\T) = \left(\Phi_1(\eta) + {\mathcal O}(1)\T^{-1}\right)\mathds{1}_{(0,1)}(\eta) + \alpha_1(\eta,\T),$$  
 as $\T\to\infty$, and $\alpha_1(\eta,\T)\to 0$ beyond all orders.
\end{lemma}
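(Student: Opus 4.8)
The integral defining $J_1$ in (\ref{Int1}) is of Laplace type, so the plan is to write the exponent as $\T g(y)$ with $g(y):=\eta\log y - y + \eta$, collect the remaining algebraic factors into the amplitude $h(y):=y^{-n}(1-y)^{-(n-1)/n}$, and apply Laplace's method to $\int_0^1 e^{\T g(y)}h(y)\,dy$. Since $g'(y)=\eta/y-1$ vanishes only at $y=\eta$, with $g''(\eta)=-1/\eta<0$ and $g(\eta)=\eta\log\eta$, the behaviour splits according to whether the maximizer $\eta$ lies inside $(0,1)$. The bookkeeping observation that drives everything is that $e^{\T g(\eta)}=\eta^{\eta\T}$ cancels the prefactor $\eta^{-\eta\T}$ exactly, and that the surviving $\T^{1/2}$ will cancel the $\T^{-1/2}$ that Laplace's method produces.

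First I would dispose of the case $\eta>1$. Then $\eta\notin(0,1)$ and $g'>0$ throughout $(0,1)$, so $g$ is strictly increasing with supremum $g(1^-)=\eta-1$; near $y=1$ the integrand is controlled by $(1-y)^{-(n-1)/n}e^{\T(\eta-1)}$, which is integrable because $(n-1)/n<1$. A direct estimate (for instance the substitution $y=1-u/\T$) bounds the integral by a power of $\T$ times $e^{\T(\eta-1)}$. Multiplying by the prefactor produces the factor $e^{\T(\eta-1-\eta\log\eta)}$, and since $\eta-1-\eta\log\eta<0$ for every $\eta>1$, the whole of $J_1$ decays exponentially in $\T$ and is absorbed into $\alpha_1$, consistent with $\mathds{1}_{(0,1)}(\eta)=0$.

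For $0<\eta<1$ the maximum $y_0=\eta$ is interior and $h$ is smooth and positive near it. I would split $(0,1)$ into a fixed neighbourhood of $y_0$ and its complement: on the complement $g\le g(\eta)-\delta$ for some $\delta>0$, so that part, after multiplication by $\eta^{n-\eta\T}\T^{1/2}$, is $\mathcal{O}(\T^{1/2}e^{-\delta\T})$ and goes into $\alpha_1$ (this also covers the endpoint $y=0$, where $e^{\T g}$ vanishes exponentially). On the neighbourhood I would run the standard Laplace expansion via the Morse change of variable $w=\mathrm{sgn}(y-\eta)\sqrt{2(g(\eta)-g(y))}$ followed by Watson's lemma, obtaining $\int_0^1 e^{\T g}h = e^{\T g(\eta)}\bigl(c_0\T^{-1/2}+c_1\T^{-3/2}+\mathcal{O}(\T^{-5/2})\bigr)$ with $c_0=h(\eta)\sqrt{2\pi/|g''(\eta)|}=\sqrt{2\pi\eta}\,\eta^{-n}(1-\eta)^{-(n-1)/n}$. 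Feeding this into $J_1$ and using the cancellations above yields $J_1=\Phi_1(\eta)+\mathcal{O}(\T^{-1})+\alpha_1$.

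The one point needing care is why the correction is genuinely $\mathcal{O}(\T^{-1})$ and not $\mathcal{O}(\T^{-1/2})$: this is precisely the fact that the Laplace series at a smooth nondegenerate interior maximum contains only the half-integer powers $\T^{-1/2},\T^{-3/2},\dots$, the intermediate $\T^{-1}$ term being killed by the parity of the Gaussian moments after the Morse reduction. Since the prefactor carries $\T^{1/2}$, the leading term becomes the $\T$-independent constant $\Phi_1(\eta)$ and the first correction is of order $\T^{-1}$. The residual work---verifying smoothness of the Morse transform, estimating the tails, and checking integrability at the singular endpoint---is routine, so I expect the only real subtlety to be this parity argument securing the $\mathcal{O}(\T^{-1})$ rate, which is exactly the order on which the logarithmic term of the main theorem later depends.
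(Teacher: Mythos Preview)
Your proposal is correct and follows essentially the same route as the paper: for $\eta>1$ a direct upper bound exploiting that $g(y)=\eta\log y-y+\eta$ is increasing on $(0,1)$, and for $\eta\in(0,1)$ a decomposition isolating a neighbourhood of the interior maximum $y=\eta$ followed by Laplace's method; the paper phrases the decomposition as $\int_0^{\varepsilon}+\int_{\varepsilon}^{1-\varepsilon}+\int_{1-\varepsilon}^1$ and cites \cite{crw,costin,Miller} rather than invoking the Morse--Watson formalism explicitly, but the substance is identical. One small point: in your $\eta>1$ estimate and in the ``complement'' piece for $\eta\in(0,1)$, the amplitude $h(y)=y^{-n}(1-y)^{-(n-1)/n}$ is \emph{not} integrable near $y=0$, so the bound $g\le g(\eta)-\delta$ alone is not enough there---you must absorb $y^{-n}$ into the exponential as $y^{\eta\tau-n}$ (as the paper does) before estimating; your parenthetical about $y=0$ suggests you have this in mind, but it should be made explicit. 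Your discussion of the parity of Gaussian moments to explain why the correction is $\mathcal{O}(\tau^{-1})$ rather than $\mathcal{O}(\tau^{-1/2})$ is a point the paper leaves implicit in its citation of Laplace's method, and is worth keeping.
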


\begin{proof}
 The proof of this result follows \cite[Section~5.1]{crw} and \cite[Propositions~6 and~7]{costin}, and
is exactly the same as those proofs with the function $\psi$ in those papers identically equal to $1$.
We need only to take the additional care of keeping track of the higher order terms, in order to be sure
they are indeed negligible relative to the dominant terms arising from the other $J_k$ in (\ref{fi3}).

If $\eta>1$ and $\T>\frac{n}{\eta-1}$ the function $y\mapsto(\eta\T-n)\log y -y\tau$ is strictly increasing  
in $y\in (0,1).$ 
Thus, for these values of $\T$, $y^{-n}e^{\T(\eta\log y-y)} = e^{(\eta\T-n)\log y -y\T} \leq e^{-\T},$
and we can estimate (\ref{Int1}) as follows:
\begin{eqnarray*}
 J_1(\eta,\T)& = & \frac{\eta^{n-\eta\T}}{\sqrt{2\pi\eta}}\T^{\frac{1}{2}}
 \int_0^{1}\frac{e^{\T(\eta\log y-y+\eta)}}{y^n(1-y)^{(n-1)/n}}dy \\
 & \leq& \frac{1}{\sqrt{2\pi\eta}}\eta^{n}\T^{\frac{1}{2}}e^{-\T(\eta\log\eta-\eta+1)}\int_0^{1}\frac{1}{(1-y)^{(n-1)/n}}dy\\
 & = & \frac{n}{\sqrt{2\pi\eta}}\eta^{n}\T^{\frac{1}{2}}e^{-\T(\eta\log\eta-\eta+1)}.
\end{eqnarray*}

If $\eta\in (0,1)$ the approach is slightly more involved, but is also essentially the one used in \cite{crw,costin}.
Fixing an $\ve \in \left(0, \min\left\{\eta e^{-1}, 1-\eta\right\}\right),$ we decompose the integral in (\ref{Int1}) as follows,
\begin{equation}
 J_1(\eta,\T) :=  \frac{\eta^{n-\eta\T}}{\sqrt{2\pi\eta}}\T^{\frac{1}{2}}
 \left(\int_0^{\ve}+\int_{\ve}^{1-\ve}+\int_{1-\ve}^1\right)\frac{e^{\T(\eta\log y-y+\eta)}}{y^n(1-y)^{(n-1)/n}}dy.\label{Int1-1}
\end{equation}

For the contributions arising from the first and third integrals in (\ref{Int1-1}), the computations in \cite[p. 385]{crw}
show that both can be bounded above by a function which exponentially decreases to zero as $\T\to\infty$, namely:
\begin{eqnarray}
 \lefteqn{\frac{\eta^{n-\eta\T}}{\sqrt{2\pi\eta}}\T^{\frac{1}{2}}
 \left(\int_0^{\ve}+\int_{1-\ve}^1\right)\frac{e^{\T(\eta\log y-y+\eta)}}{y^n(1-y)^{(n-1)/n}}dy =}\nonumber \\
 & \leq& \left(\frac{n\eta^{n}}{\sqrt{2\pi\eta}}(1-\eta^{1/n})e^{n(1-\log\eta)}\right)\T^{\frac{1}{2}}e^{-\T\eta/e}\, + \\
 & &  + \left(\frac{n\eta^{n}}{\sqrt{2\pi\eta}}\frac{\ve^{1/n}}{(1-\ve)^n}\right)\T^{\frac{1}{2}}e^{-\T g_3(1-\ve)},
\end{eqnarray}
for all sufficiently large $\T$,
where\footnote{This notation was employed in \cite{crw,costin} and is maintained here for convenience of the reader.} 
$g_3(y) := (\eta\log\eta-\eta)-(\eta\log y-y).$ 
 
For the remaining integral in (\ref{Int1-1}) we again follow \cite{crw}, as modified by \cite{costin}. Without any change
to what was done in those papers, an application of Laplace's method \cite[Chap. 3]{Miller} results in the following, 
valid when $\T\to\infty,$
\begin{equation}
 \frac{\eta^{n-\eta\T}}{\sqrt{2\pi\eta}}\T^{\frac{1}{2}}
 \int_{\ve}^{1-\ve}\frac{e^{\T(\eta\log y-y+\eta)}}{y^n(1-y)^{(n-1)/n}}dy = \frac{1}{(1-\eta)^{(n-1)/n}} + {\mathcal O}(1)\T^{-1}.
 \label{eq:useLaplace1}
\end{equation}
This completes the proof of the lemma.
\end{proof}

\begin{lemma}\label{lem:Int2} 
For each fixed $\eta\in (0,\infty)\setminus\{1\},$ the following holds:
 $$J_2(\eta,\T) = {\mathcal O}(1)\T^{-1}\mathds{1}_{(0,1)}(\eta) + \alpha_2(\eta,\T),$$  
 as $\T\to\infty$, and $\alpha_2(\eta,\T)\to 0$ beyond all orders.
\end{lemma}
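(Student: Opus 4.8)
The plan is to observe that $J_2$ is, up to an explicit scalar factor of order $\T^{-1}$, essentially the same object as $J_1$. Comparing the definitions \eqref{Int1} and \eqref{Int2}, we see that
\[
 J_2(\eta,\T) = \left(\frac{1+6n(n-1)}{12\eta\T}+ {\mathcal O}(\T^{-2})\right) J_1(\eta,\T),
\]
since the prefactor $\frac{\eta^{n-\eta\T}}{\sqrt{2\pi\eta}}\T^{\frac{1}{2}}$ and the integral $\int_0^1 \frac{e^{\T(\eta\log y-y+\eta)}}{y^n(1-y)^{(n-1)/n}}dy$ are identical in the two expressions. So the entire lemma reduces to multiplying the already-established asymptotics of $J_1$ (Lemma~\ref{lem:Int1}) by a factor that is $\mathcal{O}(\T^{-1})$ for each fixed $\eta$.

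First I would split according to the value of $\eta$, exactly as in Lemma~\ref{lem:Int1}. For $0<\eta<1$, Lemma~\ref{lem:Int1} gives $J_1(\eta,\T)=\Phi_1(\eta)+\mathcal{O}(1)\T^{-1}+\alpha_1(\eta,\T)$, where $\Phi_1(\eta)=(1-\eta)^{-(n-1)/n}$ is a nonzero constant in $\eta$. Multiplying by $\frac{1+6n(n-1)}{12\eta\T}+\mathcal{O}(\T^{-2})$ yields a leading term of size $\mathcal{O}(\T^{-1})$ (coming from the constant $\Phi_1(\eta)$ divided by $\T$), while the product of the $\mathcal{O}(\T^{-1})$ correction in $J_1$ with the $\mathcal{O}(\T^{-1})$ scalar is $\mathcal{O}(\T^{-2})$ and hence absorbed into the $\mathcal{O}(\T^{-1})$ claim. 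This gives the stated $J_2(\eta,\T)=\mathcal{O}(1)\T^{-1}$ on $(0,1)$. For $\eta>1$, Lemma~\ref{lem:Int1} shows $J_1$ decays exponentially in $\T$ (it is bounded by $\frac{n}{\sqrt{2\pi\eta}}\eta^{n}\T^{\frac{1}{2}}e^{-\T(\eta\log\eta-\eta+1)}$, and $\eta\log\eta-\eta+1>0$ for $\eta>1$); multiplying by an algebraically decaying scalar preserves decay beyond all orders, so the whole contribution folds into $\alpha_2(\eta,\T)$. The term $\alpha_1(\eta,\T)$, which vanishes beyond all orders, likewise contributes (after multiplication by the scalar) a piece that still vanishes beyond all orders, and is collected into $\alpha_2$.

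I do not anticipate a serious obstacle here: the lemma is a routine corollary of Lemma~\ref{lem:Int1}, and the only point requiring minor care is bookkeeping of the error orders --- specifically, verifying that multiplying the $\mathcal{O}(\T^{-1})$ error in $J_1$ by the $\mathcal{O}(\T^{-1})$ scalar produces an $\mathcal{O}(\T^{-2})$ term that is genuinely negligible against the dominant $\mathcal{O}(\T^{-1})$ term, and confirming that all the ``beyond all orders'' contributions remain so after multiplication by a polynomially bounded factor. The fixedness of $\eta$ is what makes the scalar $\frac{1+6n(n-1)}{12\eta\T}$ cleanly $\mathcal{O}(\T^{-1})$; as with the rest of the paper, uniformity in $\eta$ is neither needed nor claimed.
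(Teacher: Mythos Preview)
Your proposal is correct and follows essentially the same approach as the paper: the paper's proof simply notes that $J_2$ equals $J_1$ multiplied by the factor $\frac{1+6n(n-1)}{12\eta\T}+\mathcal{O}(\T^{-2})$, so the conclusion of Lemma~\ref{lem:Int1} carries over with an extra $\T^{-1}$. Your write-up is in fact more explicit about the error bookkeeping than the paper's one-line argument, but the underlying idea is identical.
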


\begin{proof}
 The proof is exactly equal to that of Lemma~\ref{lem:Int1}, with the exception that now all the terms will be 
 multiplied by $\frac{1+6n(n-1)}{12\eta\T}
 + {\mathcal O}(\T^{-2}).$
 Hence the result immediately follows.
\end{proof}

\begin{lemma}\label{lem:Int3} 
For each fixed $\eta\in (0,\infty)\setminus\{1\},$ the following holds:
 $$J_3(\eta,\T) = (n-1)\left(1-\textstyle{\frac{1}{n}}\right)\frac{1}{(1-\eta)^{(n-1)/n}}\frac{\log((1-\eta)\tau)}{(1-\eta)\tau}
 (1+o(1))\mathds{1}_{(0,1)}(\eta) + \alpha_3(\eta,\T),$$  
as $\T\to\infty,$ and $\alpha_3(\eta,\T)\to 0$ beyond all orders.
\end{lemma}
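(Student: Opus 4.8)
The plan is to analyze the integral
\[
J_3(\eta,\T) = \frac{\eta^{n-\eta\T}}{\sqrt{2\pi\eta}}\T^{\frac{1}{2}}
\int_0^{1}f_n(\T(1-y))\frac{e^{\T(\eta\log y-y+\eta)}}{y^n(1-y)^{(n-1)/n}}dy
\]
by the same Laplace-type machinery used for $J_1$ in Lemma~\ref{lem:Int1}, the essential new feature being the presence of the factor $f_n(\T(1-y))$. Recall that
\[
f_n(\T(1-y)) = (n-1)\left(1-\textstyle{\frac{1}{n}}\right)\frac{\log(\T(1-y))}{\T(1-y)} + o\!\left(\textstyle{\frac{\log\T}{\T}}\right).
\]
First I would dispose of the case $\eta>1$: here the phase $\eta\log y - y$ is strictly increasing on $(0,1)$, exactly as in the proof of Lemma~\ref{lem:Int1}, so the integrand is dominated by $e^{-\T}$ up to polynomial factors; since $f_n$ grows at most like $\log\T$ (away from $y=1$, and its singularity at $y=1$ is integrable against the smooth weight), the whole of $J_3$ is exponentially small, contributing only to the beyond-all-orders term $\alpha_3(\eta,\T)$. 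This accounts for the absence of any power-law term for $\eta>1$ in the statement.

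For $0<\eta<1$ I would split the integral as in \eqref{Int1-1}, writing $\int_0^1 = \int_0^\ve + \int_\ve^{1-\ve} + \int_{1-\ve}^1$ for $\ve\in(0,\min\{\eta e^{-1},1-\eta\})$. The two outer pieces carry the factor $f_n$, which is bounded on $[0,1-\ve]$ and at worst logarithmically large near $1$; multiplying the exponentially decaying bounds established in Lemma~\ref{lem:Int1} by this slowly varying factor keeps them exponentially small, so they again land in $\alpha_3$. The entire contribution therefore comes from the central integral $\int_\ve^{1-\ve}$, which concentrates by Laplace's method \cite[Chap.\ 3]{Miller} at the maximum $y=\eta$ of $\eta\log y - y$. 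The key point is that near $y=\eta$ the factor $f_n(\T(1-y))$ is, to leading order, the constant $f_n(\T(1-\eta)) = (n-1)(1-\frac1n)\frac{\log(\T(1-\eta))}{\T(1-\eta)}(1+o(1))$, which pulls out of the Laplace integral. Since $f_n$ varies only on the scale $\log\T/\T$ whereas the Gaussian peak has width $\T^{-1/2}$, I would justify replacing $f_n(\T(1-y))$ by $f_n(\T(1-\eta))$ on the peak by a standard continuity/dominated-variation estimate, the error being of higher order in $\log\T/\T$.

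With that replacement, the remaining central integral is precisely the one already evaluated in \eqref{eq:useLaplace1} of Lemma~\ref{lem:Int1}, namely it equals $(1-\eta)^{-(n-1)/n} + {\mathcal O}(\T^{-1})$. Multiplying by the extracted constant $f_n(\T(1-\eta))$ yields
\[
(n-1)\left(1-\textstyle{\frac{1}{n}}\right)\frac{1}{(1-\eta)^{(n-1)/n}}\frac{\log((1-\eta)\tau)}{(1-\eta)\tau}(1+o(1)),
\]
which is exactly the claimed leading term. I expect the main obstacle to be the rigorous justification of pulling $f_n(\T(1-\eta))$ out of the Laplace integral: one must control the interaction between the $\log$-singularity of $f_n$ as $y\to 1$ and the Laplace concentration at $y=\eta<1$, and verify that the $o(\log\T/\T)$ remainder in $f_n$ contributes only to the $(1+o(1))$ factor rather than to the leading coefficient. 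This is handled by exploiting that $\eta$ is fixed and bounded away from $1$, so that on the peak $1-y$ stays bounded away from $0$ and $f_n$ is genuinely slowly varying there.
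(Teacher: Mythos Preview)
Your proposal is correct and follows essentially the same approach as the paper: treat $\eta>1$ by the monotonicity of the phase to get an exponentially small bound, and for $\eta\in(0,1)$ split $\int_0^1=\int_0^\ve+\int_\ve^{1-\ve}+\int_{1-\ve}^1$, discard the outer pieces as exponentially small, and apply Laplace's method to the central integral with $f_n(\T(1-y))$ evaluated at the peak $y=\eta$. The paper's proof is terser and simply notes that Laplace's method yields \eqref{eq:useLaplace1} with the extra multiplicative factor $\frac{\log((1-\eta)\T)}{(1-\eta)\T}$; your additional remarks on why $f_n$ may be pulled out of the Laplace integral (since $\eta$ is fixed away from $1$, so $1-y$ stays bounded away from $0$ on the peak) supply justification the paper leaves implicit.
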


\begin{proof}
 The proof of this lemma follows the steps taken in the proof of Lemma~\ref{lem:Int1} with the function inside the
 integral now containing the additional multiplicative factor 
 $$f_n((1-y)\T) = (n-1)\left(1-\textstyle{\frac{1}{n}}\right)\,\frac{\log((1-y)\T)}{(1-y)\T} + 
o\left(\textstyle{\frac{\log\T}{\T}}\right),\quad \mbox{\rm  as $\T\to\infty$.}$$ 

As before, the cases $\eta>1$ and $\eta\in (0,1)$ need to be treated separately, and the last one requires the
splitting of the integral into three terms, two of them dealing with regions near the boundary. 
Since $\frac{\log((1-y)\T)}{(1-y)\T}\to 0$ as $\T\to\infty$,
the asymptotically exponentially small bounds when $\T\to\infty$ obtained in the proof of 
Lemma~\ref{lem:Int1} for the cases $\eta>1$, and for the ``boundary
integrals'' $\int_0^{\ve}$ and $\int_{1-\ve}^1$ in the case $\eta\in (0,1)$, still hold in the present case.

Thus, we are left with estimating the integral $\int_{\ve}^{1-\ve}$, which can again be done by Laplace's method 
exactly as before, giving the same result as (\ref{eq:useLaplace1}) with the additional multiplicative factor 
$\frac{\log((1-\eta)\T)}{(1-\eta)\T}.$
This completes the proof.
\end{proof}

\begin{lemma}\label{lem:Int4} 
For each fixed $\eta\in (0,\infty)\setminus\{1\},$ the following holds:
 $$J_4(\eta,\T) = {\mathcal O}(1)\frac{\log((1-\eta)\tau)}{(1-\eta)\tau}\,\frac{1}{\T}\mathds{1}_{(0,1)}(\eta) + 
 \alpha_4(\eta,\T),$$  
 as $\T\to\infty$, and $\alpha_4(\eta,\T)\to 0$ beyond all orders.
\end{lemma}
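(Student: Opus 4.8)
The plan is to prove Lemma~\ref{lem:Int4} by recognizing that $J_4$ is built from the very same integral structure as $J_3$, but carrying the extra slowly-vanishing prefactor $\frac{1+6n(n-1)}{12\eta\T} + {\mathcal O}(\T^{-2})$ that already appeared in the passage from $J_1$ to $J_2$. Comparing the definitions (\ref{Int3}) and (\ref{Int4}), one sees that $J_4$ differs from $J_3$ precisely by this multiplicative factor, exactly as $J_2$ differed from $J_1$ in Lemma~\ref{lem:Int2}. The strategy is therefore to reuse the estimate established in Lemma~\ref{lem:Int3} wholesale and then track the effect of multiplying through by the ${\mathcal O}(\T^{-1})$ prefactor.

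First I would invoke the conclusion of Lemma~\ref{lem:Int3}, which gives
$$J_3(\eta,\T) = (n-1)\left(1-\textstyle{\frac{1}{n}}\right)\frac{1}{(1-\eta)^{(n-1)/n}}\frac{\log((1-\eta)\tau)}{(1-\eta)\tau}(1+o(1))\mathds{1}_{(0,1)}(\eta) + \alpha_3(\eta,\T).$$
Since $J_4(\eta,\T)$ is obtained from $J_3(\eta,\T)$ by inserting the factor $\frac{1+6n(n-1)}{12\eta\T} + {\mathcal O}(\T^{-2})$ inside the common normalization, I would multiply the displayed expression by this factor. The leading term then acquires an additional $\frac{1}{\T}$, producing a contribution of order $\frac{\log((1-\eta)\tau)}{(1-\eta)\tau}\frac{1}{\T}$ supported on $(0,1)$, which is exactly ${\mathcal O}(1)\frac{\log((1-\eta)\tau)}{(1-\eta)\tau}\frac{1}{\T}\mathds{1}_{(0,1)}(\eta)$ as claimed. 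The ${\mathcal O}(\T^{-2})$ piece of the prefactor contributes a strictly higher-order term that is absorbed into the same ${\mathcal O}(1)$ bound.

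For the remainder, I would set $\alpha_4(\eta,\T)$ equal to $\alpha_3(\eta,\T)$ multiplied by the same bounded prefactor (of order $\T^{-1}$). Since $\alpha_3(\eta,\T)\to 0$ beyond all orders, meaning faster than any power of $\T^{-1}$, multiplying it by a factor that is itself ${\mathcal O}(\T^{-1})$ preserves this property: the product still vanishes beyond all orders. In particular, for $\eta>1$ the entire contribution is exponentially small, exactly as in the proof of Lemma~\ref{lem:Int3}, because the factor $\frac{1+6n(n-1)}{12\eta\T}+{\mathcal O}(\T^{-2})$ is only polynomially large (in fact decaying) and cannot disturb an exponential decay.

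The proof is thus essentially a corollary of Lemma~\ref{lem:Int3}, and I do not expect any genuine obstacle; the only point requiring a little care is confirming that the stated order of the leading term is right, namely that multiplying the $\frac{\log((1-\eta)\tau)}{(1-\eta)\tau}$ behaviour of $J_3$ by an ${\mathcal O}(\T^{-1})$ quantity yields precisely the advertised $\frac{\log((1-\eta)\tau)}{(1-\eta)\tau}\frac{1}{\T}$ rate, and that this term is genuinely negligible compared to the dominant $J_1$ and $J_3$ contributions in (\ref{fi3}). As in Lemma~\ref{lem:Int2}, the argument is short: the result follows immediately once the multiplicative correspondence between $J_4$ and $J_3$ is made explicit.
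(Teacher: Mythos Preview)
Your proposal is correct and matches the paper's own proof essentially verbatim: the paper simply notes that $J_4$ differs from $J_3$ by the multiplicative factor $\frac{1+6n(n-1)}{12\eta\T} + {\mathcal O}(\T^{-2})$, exactly as $J_2$ differs from $J_1$, and that this immediately entails the stated estimate.
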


\begin{proof}
The proof of this lemma is related to that of Lemma~\ref{lem:Int3}, as the proof of Lemma~\ref{lem:Int2} 
is to that of Lemma~\ref{lem:Int1}: 
the expressions of the corresponding $J_k$ differ by the factor 
$\frac{1+6n(n-1)}{12\eta\T} + {\mathcal O}(\T^{-2}),$ which entails
the result.
\end{proof}

Putting together the results in Lemmas~\ref{lem:Int1}--\ref{lem:Int4} we conclude the proof of Theorem~\ref{teo:main} 
in the case of monomeric initial data,
that is: when $c_j(0)=0$ for all $j\geq n,$ and the only initial component potentially nonzero is $c_1(0).$

Let us now consider the case when the initial data is nonzero. More specifically we shall consider initial conditions satisfying the
following assumption:
\begin{description}
 \item[(H1)] $\exists \rho_1, \rho_2>0,\; \mu > 1:\; \forall j\geq n,\; j^\mu c_j(0)\in [\rho_1, \rho_2].$
\end{description}

So, let us now consider the sum in the right hand side of (\ref{cjtausolution}). We are interested in studying the behaviour of
\[
 \left(\frac{n\T}{\alpha}\right)^\frac{n-1}{n} e^{-\T}\sum_{k=n}^{j}\frac{\T^{j-k}}{(j-k)!}c_k(0),
\]
when $j,\,\tau \rightarrow +\infty,$ with $\eta=j/\tau\neq 1$ fixed.
As in \cite{crw,costin}, we shall write $\nu:=\eta^{-1}$ and use $\T=j\nu$ in the above expression.
We are then left with studying the limit, as $j\to\infty$ of
\begin{equation}
 \left(\frac{n\nu}{\alpha}\right)^\frac{n-1}{n} j^{\frac{n-1}{n}}e^{-j\nu}
 \sum_{k=n}^{j}\frac{(j\nu)^{j-k}}{(j-k)!}c_k(0). \label{initialcontribution}
\end{equation}

Consider the cases $\nu>1$ and $\nu\in (0,1)$ separately.

When $\nu>1$ the limit is easy to handle, giving an asymptotically exponentially small contribution. In fact,
the situation is entirely analogous to the one in \cite[page 387]{crw}: using the upper bound in {\bf (H1)} and changing
the summation variable $k\mapsto\ell:=j-k$ in (\ref{initialcontribution}) we get
\begin{equation}
 \left(\frac{n\nu}{\alpha}\right)^\frac{n-1}{n} j^{\frac{n-1}{n}}e^{-j\nu}
 \sum_{k=n}^{j}\frac{(j\nu)^{j-k}}{(j-k)!}c_k(0) 
\leq\frac{\rho_2}{n^{\mu}}\left(\frac{n\nu}{\alpha}\right)^\frac{n-1}{n} j^{\frac{n-1}{n}}e^{-j\nu}
 \sum_{\ell=0}^{j-n}\frac{(j\nu)^{\ell}}{\ell!},
\label{initialcontributionbound1}
\end{equation}
and from here the computations in \cite{crw} apply almost verbatim since (\ref{initialcontributionbound1}) with
$n=2$ reduces to the expression in \cite{crw}. The final result is the bound
\begin{equation}
 \left(\frac{n\nu}{\alpha}\right)^\frac{n-1}{n} \!\!\!j^{\frac{n-1}{n}}e^{-j\nu}
 \sum_{k=n}^{j}\frac{(j\nu)^{j-k}}{(j-k)!}c_k(0) 
< \frac{1}{\nu^n\sqrt{2\pi}}\left(\frac{n\nu}{\alpha}\right)^\frac{n-1}{n} \!\!\!j^{\frac{3}{2}-\frac{1}{n}}
e^{-j(\nu-1-\log\nu)}(1+o(1)),\label{initialcontributionbound2}
\end{equation}
as $j\to\infty.$ Since $\nu>1,$ we have $\nu-1-\log\nu>0,$ and thus the upper bound is asymptotically
exponentially small in the large $j$ limit.

When $\nu\in(0,1)$ we again follow \cite[page 388]{crw}: again changing the summation
variable $k\mapsto\ell:=j-k$ and using the upper bound in {\bf (H1)}, we can choose 
a fixed $\beta\in\left(\nu e^{1-\nu}, \min\{\nu e,1\}\right)$ and split the sum into 
a ``small'' $\ell$ and a ``large'' $\ell$ contribution,
\[
 \sum_{\ell=0}^{j-n}\frac{(j\nu)^{\ell}}{\ell!(j-\ell)^{\mu}} =  
 \sum_{0\leq\ell \leq\beta j}\frac{(j\nu)^{\ell}}{\ell!(j-\ell)^{\mu}}
 + 
 \sum_{\beta j< \ell \leq j-n}\frac{(j\nu)^{\ell}}{\ell!(j-\ell)^{\mu}}.
\]
The analysis of each of these terms proceeds exactly as in \cite[page 388]{crw}, and concluding that the contribution from the 
``large'' $\ell$ is asymptotically exponentially small when $j\to\infty$, and the one due to the ``small'' $\ell$ sum
being bounded above by
\begin{equation}
 \frac{\rho_2}{\left((1-\beta)n\right)^{\mu}}\left(\frac{n}{\alpha}\right)^{\frac{n-1}{n}}
 \eta^{-\mu}\T^{\frac{n-1}{n}-\mu},\label{upper}
\end{equation}
where we have used the fact that $j\nu=\T.$

To complete the proof we need to obtain a lower bound for the limit of (\ref{initialcontribution}) when $j\to\infty.$
Using the lower bound in {\bf (H1)} and the same change of variable $k\mapsto\ell$ as above, it is easy to obtain the bound
\begin{equation}
 \left(\frac{n\nu}{\alpha}\right)^\frac{n-1}{n}\!\!\!j^{\frac{n-1}{n}}e^{-j\nu}
 \sum_{k=n}^{j}\frac{(j\nu)^{j-k}}{(j-k)!}c_k(0) 
> \rho_1\left(\frac{n\nu}{\alpha}\right)^{\frac{n-1}{n}}\!\!\!j^{\frac{n-1}{n}-\mu}
e^{-j\nu}\sum_{\ell=0}^{j-n}\frac{(j\nu)^{\ell}}{\ell!}.\label{initialcontributionbound3}
\end{equation}
Now write
\[
 e^{-j\nu}\sum_{\ell=0}^{j-n}\frac{(j\nu)^{\ell}}{\ell!} =
 e^{-j\nu}\sum_{\ell=0}^{j}\frac{(j\nu)^{\ell}}{\ell!} - e^{-j\nu}\!\!\sum_{\ell=j-n+1}^{j}\!\!\frac{(j\nu)^{\ell}}{\ell!}.
\]
Since $\nu\in (0,1)$, the first term in the right hand side converges to $1$ as $j\to\infty$ (cf., e.g., \cite[formula 6.5.34]{AS}).
For the second term, the sum of which has $n$ terms, a finite and fixed number independent of $j$, we have
\begin{eqnarray*}
 e^{-j\nu}\!\!\!\sum_{\ell=j-n+1}^{j}\!\!\frac{(j\nu)^{\ell}}{\ell!} & = & 
 e^{-j\nu}\left(\textstyle{\frac{(j\nu)^j}{j!}} + \textstyle{\frac{(j\nu)^{j-1}}{(j-1)!}} + \textstyle{\frac{(j\nu)^{j-2}}{(j-2)!}}+ 
 \cdots +  \textstyle{\frac{(j\nu)^{j-n+1}}{(j-n+1)!}}\right)\\
 & = & e^{-j\nu}\frac{(j\nu)^j}{j!}\left(1 + \textstyle{\frac{1}{\nu}} + \textstyle{\frac{1}{\nu^2}}\left(1-\textstyle{\frac{1}{j}}\right) + \cdots 
 + \textstyle{\frac{1}{\nu^{n-1}}}\left(1+ {\mathcal O}(j^{-1})\right) \right)\\
 & = & e^{-j\nu}\frac{(j\nu)^j}{j!}\left(1 + \textstyle{\frac{1}{\nu}} + \textstyle{\frac{1}{\nu^2}} + \cdots 
 + \textstyle{\frac{1}{\nu^{n-1}}} \right)\left(1+ {\mathcal O}(j^{-1})\right)\\
 & = & e^{-j\nu}\frac{(j\nu)^j}{j!}\textstyle{\frac{1-\nu^{n}}{(1-\nu)\nu^{n-1}}}\left(1+ {\mathcal O}(j^{-1})\right).
\end{eqnarray*}
Using Stirling's expansion we have $e^{-j\nu}\frac{(j\nu)^j}{j!} = \frac{1}{\sqrt{2\pi j}}e^{j(1-\nu+\log\nu)}(1+{\mathcal O}(j^{-1}))$.
From $\nu\in(0,1)$ it follows that $1-\nu+\log \nu<0$, and thus
\(
 e^{-j\nu}\!\displaystyle{\sum_{\ell=j-n+1}^{j}}\!\!\frac{(j\nu)^{\ell}}{\ell!}
\)
converges to zero exponentially fast as $j\to\infty.$ This implies that 
(\ref{initialcontributionbound3}) can be written as
\begin{equation}
 \left(\frac{n\nu}{\alpha}\right)^\frac{n-1}{n} \!\!\!j^{\frac{n-1}{n}}e^{-j\nu}
 \sum_{k=n}^{j}\frac{(j\nu)^{j-k}}{(j-k)!}c_k(0) 
> \rho_1\left(\frac{n\nu}{\alpha}\right)^\frac{n-1}{n} \!\!\!j^{\frac{n-1}{n}-\mu}
(1+o(1)).\label{initialcontributionbound4}
\end{equation}
Hence, a lower bound for the limit of (\ref{initialcontribution}) as $j\to\infty$ is
\begin{equation}
 \rho_1\left(\frac{n}{\alpha}\right)^{\frac{n-1}{n}}\!\!\!\eta^{-\mu}\T^{\frac{n-1}{n}-\mu}(1+o(1)),\label{lower}
\end{equation}
where we have used the fact that $j\nu=\T.$
From (\ref{initialcontributionbound3}) and (\ref{initialcontributionbound4}) the term (\ref{convrate2}) follows. 

This concludes the proof of Theorem~\ref{teo:main}.
\end{proof}




\begin{thebibliography}{99}

\bibitem{AS} {\sc M. Abramowitz and I.A. Stegun} (Eds.), Handbook of mathematical functions, with formulas, 
graphs, and mathematical tables, Dover, New York, 1972.

\bibitem{cmm2010} {\sc J.A. Ca\~nizo, S. Mischler and C. Mouhot}, {\em Rate of convergence to self-similarity for Smoluchowski's 
coagulation equation with constant coefficients}, SIAM J. Math. Anal., {\bf 41} (2010) (6), pp.~2283--2314. 

\bibitem{c81} {\sc J. Carr}, Applications of Centre Manifold Theory,
Applied Mathematical Sciences vol. 35, Springer-Verlag, New York, 1981.

\bibitem{cps} {\sc F.P. da Costa, J.T. Pinto and R. Sasportes}, {\em Convergence to self-similarity in 
an addition model with power-like time-dependent input of monomers}, in 
Applied and industrial mathematics in Italy II, selected contributions from 
the 8th SIMAI conference, V. Cutello, G. Fotia
and L. Puccio, eds., Series on Advances in Mathematics for Applied Sciences, vol. 75, 
World Scientific, Singapore, 2007, pp.~303--314.

\bibitem{crw} {\sc F.P. da Costa, H. van Roessel and J.A.D. Wattis}, {\em Long-time behaviour and
self-similarity in a coagulation equation with input of monomers}, 
Markov Processes Relat. Fields, {\bf 12}  (2006), pp.~367--398.

\bibitem{cs} {\sc F.P. da Costa and R. Sasportes}, {\em Dynamics of a non-autonomous ODE system occurring in coagulation theory}, 
 J. Dynam. Differential Equations, {\bf 20}  (2008), pp.~55--85.

\bibitem{costin} {\sc O. Costin, M. Grinfeld, K.P. O'Neill and H. Park}, {\em Long-time behaviour 
of point islands under fixed rate deposition,} Commun. Inf. Syst. {\bf 13}, (2), (2013), pp.~183--200.

\bibitem{gr}
{\sc I.S. Gradshteyn and I.M. Ryzhik},  Table of Integrals, Series, and Products.
Edited by A. Jeffrey and D. Zwillinger. Academic Press, New York, 7th edition, 2007.

\bibitem{Miller} {\sc P.D. Miller}, Applied Asymptotic Analysis, Graduate Studies in Mathematics vol. 75,
American Mathematical Society, Providence, 2006.

\bibitem{Mul09} {\sc P.A. Mulheran}, {Theory of cluster growth on surfaces}, in Metallic Nanoparticles, J.A. Blackman, ed., vol. 5 of
Handbook of Metal Physics, (series editor: P. Misra), Elsevier, Amsterdam, 2009, pp.~73--111.

\bibitem{sri} {\sc R. Srinivasan}, {\em Rates of convergence for Smoluchowski's coagulation equations}, 
SIAM J. Math. Anal., {\bf 43}, (4), (2011), pp.~1835--1854.
\end{thebibliography}
\end{document}